\newtheorem{thm}{Theorem}[section]
\newtheorem{introthm}{Theorem}
\newtheorem*{thm*}{Theorem} 
\newtheorem{lem}[thm]{Lemma}
\newtheorem{prop}[thm]{Proposition}
\theoremstyle{definition}
\newtheorem{Def}[thm]{Definition}
{\newtheorem{ex}[thm]{Example}}
{\newtheorem{rem}[thm]{Remark}}
{}
{}
\newcommand{\C}{\ensuremath{\mathbb{C}}}
\newcommand{\Z}{\ensuremath{\mathbb{Z}}}
\newcommand{\Q}{\ensuremath{\mathbb{Q}}}
\newcommand{\A}{\ensuremath{\mathbb{A}}}
\newcommand{\GL}{\operatorname{GL}}
\newcommand{\Aut}{\operatorname{Aut}}
\newcommand{\Autd}{\underline{\operatorname{Aut}}^\del}
\newcommand{\Spec}{\operatorname{Spec}}
\newcommand{\Frac}{\ensuremath{\mathrm{Frac}}}
\newcommand{\mc}[1]{\mathcal #1}
\newcommand{\wh}{\widehat}
\newcommand{\del}{\ensuremath{\partial}}
\renewcommand{\P}{\mathbb{P}}
\title{Differential Embedding Problems over Laurent series fields}
\author{Annette Bachmayr, David Harbater and Julia Hartmann}
\date{February 5, 2018}
\begin{document}

\thanks{The first author was funded by the Deutsche Forschungsgemeinschaft (DFG) -
	grant MA6868/1-1 and by the Alexander von Humboldt foundation through a Feodor Lynen fellowship. The second and third authors were supported on NSF collaborative FRG grant: DMS-1463733; additional support was provided by NSF collaborative FRG grant DMS-1265290 (DH) and a Simons Fellowship (JH).\\
\textit{Mathematics Subject Classification} (2010):  12H05, 20G15 (primary); 12F12, 13F25, 14H25 (secondary).\\
\textit{Key words and phrases.} Picard-Vessiot theory, inverse differential Galois problem, embedding problems, patching, linear algebraic groups.}

\begin{abstract}
We solve the inverse differential Galois problem over the fraction field of $k[[t,x]]$ and use this to solve split differential embedding problems over $k((t))(x)$ that are induced from $k(x)$.
The proofs use patching as well as prior results on inverse problems and embedding problems. 
\end{abstract}

\maketitle                                                                    

\section*{Introduction}
This paper is concerned with differential Galois theory over certain fields of Laurent series of characteristic zero. In particular, we study which linear algebraic groups occur as Galois groups of Picard-Vessiot rings over such fields and how these extensions fit together in towers. The latter is done by studying embedding problems, in analogy to usual Galois theory. The differential fields we consider are of the forms $k((x,t))$ and $k((t))(x)$ with derivation induced from $\frac{d}{dx}$.

Classically, differential Galois theory considered differential fields with algebraically closed fields of constants. More recently, there has been a lot of interest in extending results to more general differential fields (e.g.\ see \cite{amano-masuoka}, \cite{Andre}, \cite{BHH}, \cite{CHP}, \cite{Dyc}, \cite{LSP}). The Laurent series fields studied in this manuscript are examples of such fields. 

Differential embedding problems ask whether a given Picard-Vessiot ring with group $H$ can be embedded into one with group $G$, if $G$ is a given extension of $H$.  An embedding problem is {\em split} if there is a homomorphic section $H\rightarrow G$. 

The main goal of the current paper is to prove the following theorem (Theorem~\ref{main thm}):

\begin{introthm} \label{EP thm intro}
	Let $k$ be a field of characteristic zero and let $K=k((t))$. Consider the rational function field $F=K(x)$ with derivation $\del=\frac{d}{dx}$. Then every split $(K(x)/k(x))$-differential embedding problem over $F$ has a proper solution. 
\end{introthm}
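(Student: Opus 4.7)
The plan is to exploit the splitting $s \colon H \to G$ to write $G = N \rtimes H$, where $N = \ker(G \to H)$, and then build the desired $G$-Picard-Vessiot ring over $F$ by combining the given $H$-extension with a newly constructed $N$-extension via patching. The $N$-extension will be supplied by the authors' solution of the inverse differential Galois problem over $L := \Frac(k[[t,x]])$ (the theorem announced in the abstract), and the patching will glue this realization together with the induced $H$-extension over $F$.

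More concretely, let $E_0/k(x)$ be an $H$-Picard-Vessiot ring inducing $E = E_0 \otimes_{k(x)} F$. First, I would apply the inverse problem result to the group $N$ to obtain an $N$-PV ring $R$ over $L$. Next, I would set up a Harbater--Hartmann-style differential patching diagram with base $F = k((t))(x)$, in which $L$ (or a suitable localization thereof, containing the realization $R$) plays the role of one local piece and the complementary piece carries a copy of the induced $H$-extension $E$. Patching the differential modules underlying $R$ and $E$ along their overlap should produce a single differential module $M$ over $F$ carrying a natural $G$-action: the section $s$ identifies $G$ with $N \rtimes H$, so the $H$-action on $E$ lifts to act on the $N$-part in the prescribed semidirect manner. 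Finally, one extracts a PV ring $\tilde{E}/F$ from $M$, verifies $\tilde{E} \supset E$, and checks that the induced quotient on Galois groups recovers the given $G \to H$.

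The principal obstacle is controlling the differential Galois group of the patched object: one must show it is exactly $G$, rather than a proper subgroup or a different extension of $H$ by a quotient of $N$. This amounts to showing that the $N$- and $H$-realizations assemble ``freely,'' which in turn depends on the constants of the patched object remaining equal to $k$ and on the compatibility of the local data along the overlap. Torsor-theoretic arguments within the patching framework, together with the fact that $R$ has field of constants $k$ and that $E$ is defined over $k(x)$, should give the desired control. The hypothesis that the $H$-extension is \emph{induced from $k(x)$} is precisely what makes this compatibility manageable: the $H$-structure already lives over the base subfield $k(x)$ shared by $F$ and $L$, so it is automatically compatible with any local-global patching performed on the ``new'' $N$-direction. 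A secondary concern is the matching of derivations: the derivation $\partial = d/dx$ agrees on $F$ and $L$ (both restrict correctly to $k(x)$), so this should pose no issue beyond routine bookkeeping.
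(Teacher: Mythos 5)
Your overall strategy is the one the paper follows: realize $N$ over a two-dimensional local piece using the inverse problem over $k((x,t))$ (Theorem~\ref{thm inverse problem}), place the induced $H$-extension in the patching setup, and invoke a torsor-patching theorem (here Theorem~\ref{thm BHHW}, quoted from \cite{BHHW}) to assemble a $G$-Picard--Vessiot ring; the ``induced from $k(x)$'' hypothesis is indeed what makes the $H$-part compatible with the local data. So the skeleton is right.

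There are, however, two genuine gaps. First, your patching arrangement is off: Theorem~\ref{thm BHHW} requires the given $H$-ring $R$ to be a differential subalgebra of the \emph{same} overfield $F_1$ over which the new $N$-Picard--Vessiot ring is constructed (the other patch $F_2=F_U$ enters only through the factorization property), rather than having $R$ and the $N$-realization sit on complementary pieces as you propose. Second, and more seriously, arranging that embedding $R\hookrightarrow F_1$ is the hardest step of the proof and you have not addressed it. By Proposition~\ref{prop embed pvr}, $R_0$ embeds into $k'((x-\alpha))$ for some regular point $\alpha$ and some finite extension $k'/k$, hence $R$ embeds into $k'((x-\alpha,t))$; but $k'((x-\alpha,t))$ is \emph{not} one of the standard patches on the projective $x$-line over $k[[t]]$, so you cannot simply ``set up a Harbater--Hartmann diagram with base $F=k((t))(x)$'' and expect $R$ to land in a patch. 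The paper fixes this by the substitution $y=(x-\alpha)/t$ (geometrically, blowing up $\P^1_{k[[t]]}$ at the closed point $(x-\alpha,t)$), working on the $y$-line, choosing $P$ with residue field $k'$, and then verifying that $k'((x-\alpha,t))$ includes into $F_P=k'((q,t))$ and that the derivation $\del_a$ on $F_P$ (with $a=\del q/\del x$) restricts to $\del/\del(x-\alpha)$ there. This derivation compatibility is exactly the point you dismiss as ``routine bookkeeping''; it is not routine, since $t$ is a constant for $d/dx$ while the coordinate change mixes $x$ and $t$, and getting a diamond whose derivation restricts to $d/dx$ on $F$ while accommodating both the $N$-realization over $k'((q,t))$ and the embedded copy of $R$ requires the specific choice of $a$ and the computation at the end of the paper's proof of Theorem~\ref{main thm}.
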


As indicated in the statement of Theorem~\ref{EP thm intro}, we restrict to split $(K(x)/k(x))$-differential embedding problems; i.e., split embedding problems over $K(x)$ that are induced from split embedding problems over $k(x)$ (see the beginning of Section~\ref{sec: diffebp} for a precise definition).  The splitness assumption is generally necessary (see Remark~\ref{nonsplit} below). 

Note that the inverse differential Galois problem is contained in the question of whether split differential embedding problems have solutions, by taking $H$ above to be the trivial group. Over $k((t))(x)$, the inverse differential Galois problem was solved in \cite{BHH}: All linear algebraic groups over $k((t))$ occur as differential Galois groups of Picard-Vessiot rings of this field.
Differential embedding problems were previously studied in \cite{MatzatPut}, \cite{Hartmann}, \cite{Oberlies}, \cite{Ernst}; and it was shown in \cite{BHHW} that {\em every} differential embedding problem over $\C(x)$ has a proper solution. In the current paper, we build on results shown in \cite{BHHW} and in \cite{BHH}. The proofs in those papers used patching 
methods, which can be applied over the complex numbers as well as Laurent series fields because these fields are complete.

Theorem~\ref{EP thm intro} is a key ingredient in the companion paper \cite{BHHP}. There it is shown that all split differential embedding problems over $k((t))(x)$ (even if not induced) have proper solutions; and moreover this holds for $K(x)$ where $K$ is any {\em large} field of infinite transcendence degree over $\Q$. The latter class of fields was introduced by Pop and turned out to provide the appropriate context to study inverse Galois theory and other algebraic structures (see \cite{pop:littlesurvey} for the definition and a survey on large fields).  Laurent series fields are large, and every large field is existentially closed in its Laurent series field. Proving Theorem~\ref{EP thm intro} and then passing from Laurent series fields to large fields is analogous to (and was motivated by) what was done earlier in usual Galois theory, where it was shown that all finite split embedding problems over $K(x)$ have proper solutions if $K$ is large (see \cite{Po96}, \cite{HJ98}, and \cite{HS05}).

We also consider the inverse differential Galois problem over the fraction field $k((x,t))$ of $k[[x,t]]$, equipped with the derivation given by $\partial/\partial x$. We prove the following result (see Theorem~\ref{thm inverse problem}):

\begin{introthm} \label{IGP thm intro}
	Let $k$ be a field of characteristic zero and let $G$ be a linear algebraic group over $k((t))$. Then there is a Picard-Vessiot ring over $k((x,t))$ with differential Galois group~$G$.
\end{introthm}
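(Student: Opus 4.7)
My plan is to combine the inverse Galois result over $F := k((t))(x)$ from \cite{BHH} with a base change argument. There is a natural inclusion of differential fields $F \hookrightarrow F' := k((x,t))$: the element $x$ lies in $F'$, and $k((t)) = \Frac(k[[t]])$ embeds in $F' = \Frac(k[[x,t]])$ via $k[[t]] \hookrightarrow k[[x,t]]$, with the derivations $d/dx$ and $\partial/\partial x$ compatible under this inclusion. Both fields have the same field of constants $k((t))$: for $F'$ this follows from the identification $k[[x,t]] = k[[t]][[x]]$, whose $\partial/\partial x$-kernel is $k[[t]]$, and one then extends this description to the fraction field by a standard argument.

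Given $G$, I would apply \cite{BHH} to produce a Picard--Vessiot ring $R/F$ with differential Galois group $G$, coming from a linear ODE $\partial Y = AY$ with $A \in F^{n\times n}$. I would then form the base change $R' := R \otimes_F F'$, viewed as a differential $F'$-algebra. By the standard base change theory for Picard--Vessiot rings with preserved constants (e.g., via the analysis in van der Put--Singer), $R'$ admits a $\partial$-maximal ideal $P$ such that $R'/P$ is a Picard--Vessiot ring over $F'$ for the same equation, and its differential Galois group $H$ is a closed subgroup of $G$.

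The crux of the proof is to establish $H = G$, equivalently that $R \otimes_F F'$ is itself $\partial$-simple, equivalently that $\Frac(R) \cap F' = F$ inside a common overfield. The main obstacle is that $F'/F$ is \emph{not} a regular field extension: for instance $\sqrt{1-x} \in k[[x]] \subset F'$ is algebraic of degree two over $F$, so in principle Galois groups can shrink under this base change whenever $G$ has nontrivial finite quotients. To handle this, my plan is to exploit the flexibility in the \cite{BHH} patching construction: by varying the patching data, one should be able to produce a Picard--Vessiot ring $R$ whose function field is linearly disjoint from $F'$ over $F$, which gives $P = 0$ and $H = G$. An alternative route, which avoids the base-change subtleties entirely, would be to redo the patching argument of \cite{BHH} directly over the two-dimensional complete local ring $k[[x,t]]$ using patching techniques for such rings; this alternative may well be cleaner, though it requires setting up the full patching machinery over $k[[x,t]]$ from the ground up.
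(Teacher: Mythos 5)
You have correctly diagnosed the crux: the extension $k((x,t))/k((t))(x)$ is not regular, so the naive base change $R\otimes_F F'$ need not be differentially simple and the group can shrink. But your proposal stops exactly at that point. The sentence ``by varying the patching data, one should be able to produce a Picard--Vessiot ring $R$ whose function field is linearly disjoint from $F'$ over $F$'' is the entire content of the theorem, and you give no mechanism for achieving it; the alternative (redoing patching over $k[[x,t]]$ from scratch) is likewise only announced. As written, this is a plan with the key step missing, not a proof.

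For comparison, the paper's resolution is concrete and rests on two ideas you do not have. First, a change of coordinate: setting $z=t/x$, one has $F=k((t))(z)$ with derivation $a\,\frac{d}{dz}$, $a=-z^2/t$, and the construction of \cite{BHH} (Theorem~\ref{thm inverse problem rational function field} here) produces a Picard--Vessiot ring $R/F$ with group $G$ sitting inside $k((z,t))$. Second, the explicit substitution $z\mapsto t/x$ gives a differential embedding $k((z,t))\hookrightarrow \Frac(k[x^{-1}][[t]])=:F_1$ (Lemma~\ref{lem emb}), i.e.\ it places $R$ inside the patch \emph{complementary} to $F_2=k((x,t))$ in the differential diamond $(F,F_1,F_2,F_0)$ with $F_0=k((x))((t))$, where $F_1\cap F_2=F$ and $C_{F_0}=C_F=k((t))$. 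The lifting lemma from \cite{BHHW} (Lemma~\ref{lem lifting PVR}) then shows that the compositum $RF_2\subseteq F_0$ is a Picard--Vessiot ring over $k((x,t))$ with group $G$ --- this lemma is precisely the rigorous form of the ``linear disjointness'' you need, but it only applies because $R$ has been exhibited inside $F_1$. Without the coordinate change and the explicit embedding of $k((z,t))$ into $F_1$, there is no reason the output of \cite{BHH} should intersect $k((x,t))$ only in $F$, and your argument does not close.
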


This result is used in the proof of Theorem~\ref{EP thm intro}.  For comparison, not all linear algebraic groups over $k((t))$ occur as differential Galois groups over the iterated Laurent series field $k((t))((x))$, and not all such groups defined over $k$ occur over $k((x))$.  As a special case, Theorem~\ref{IGP thm intro} recaptures the fact that every finite group is a Galois group over $k((x,t))$ in characteristic zero, and shows moreover that there is such an extension in which $k((t))$ is algebraically closed. Our theorem more generally also shows that all nonconstant finite group schemes occur as differential Galois groups over $k((x,t))$.

In usual Galois theory, the inverse problem over $k((x,t))$ was solved in \cite[Corollary~3.21]{Le99}, using patching methods.  Later, in \cite[Theorem~5.1]{HS05}, it was shown that every finite split embedding problem over $k((x,t))$ has a proper solution. Whether the latter can be generalized to differential embedding problems is still an open question. 

The manuscript is organized as follows. Section~\ref{setup} 
explains a patching setup for function fields over Laurent series fields, and equips this setup with a differential structure. In Section~\ref{PVtheory}, we prove results on the existence of Picard-Vessiot rings that are contained in the types of fields that arise in the patching setup, over an arbitrary constant field of characteristic zero.
Section~\ref{sec inverse} solves the inverse problem over $k((x,t))$, by building on the earlier results and \cite{BHHW}. In Section~\ref{sec: diffebp} we define split $K(x)/k(x)$-differential embedding problems and show that these are solvable when $K=k((t))$.

\medskip

We thank Florian Pop for helpful discussions.

\section{A differential setup for patching}\label{setup}

We begin by recalling the setup of patching over fields from \cite{HH}, which we will apply in our situation.  	
Let $T$ be a complete discrete valuation ring with residue field $k$ and uniformizer $t$; let $\wh X$ be a smooth connected projective $T$-curve with closed fiber $X$; and let $F$ be the function field of $\wh X$.  To each closed point $P \in X$ we assign a field $F_P$ that contains $F$; viz., $F_P$ is the fraction field of the completion $\wh R_P$ of the local ring $R_P:=\mathcal{O}_{\wh X,P}$ of $\wh X$ at $P$.  Also, to 
each non-empty affine open subset $U \subset X$ we assign an overfield $F_U$ of $F$.  Here $F_U$ is the fraction field of the $t$-adic completion $\wh R_U$ of the subring $R_U \subset F$ of elements that are regular at the points of $U$.  For $P,U$ as above, the fields $F_P, F_U$ are each contained in a common overfield $F_P^\circ$, which is a complete discretely valued field.  Namely, we consider the completion $\wh R_P^\circ$ of the localization of $\wh R_P$ at its ideal $t\wh R_P$, and take $F_P^\circ$ to be its fraction field.  

In this paper, it will suffice to treat the case of the projective $x$-line over $k[[t]]$.  In that situation, if $P$ is the point $x=0$ and $U$ is its complement in the closed fiber $\P^1_k$, 
then $F_P$ is the fraction field $k((x,t))$ of $\wh R_P = k[[x,t]]$, and
$F_U$ is the fraction field of $k[x^{-1}][[t]]$.  The field $F_P^\circ$ is $k((x))((t)) = \Frac(k((x))[[t]])$, which indeed contains $F_P$ and $F_U$. (See also the discussion before \cite[Theorem~5.10]{HH}.)  More generally, we have the following:

\begin{prop} \label{prop patching}
Let $k$ be a field of characteristic zero, let $T=k[[t]]$, and let $\wh X = \P^1_T$, with closed fiber $X = \P^1_k$. 
Let $P$ be a closed point of $\A^1_k\subseteq X$ that is not necessarily $k$-rational, and let $U$ be the complement of $P$ in $X$.  Let $q:=q(x) \in k[x] = \mc O(\A^1_k)$ generate the maximal ideal associated to $P$, let $r=\deg(q)$, and let $k'=k[x]/(q)$.  Then $F_P=k'((q,t))$, $F_P^\circ=k'((q))((t))$,
and $F_U=\Frac(k[q^{-1}, xq^{-1},\dots,x^{r-1}q^{-1}][[t]])$.
\end{prop}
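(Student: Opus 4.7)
The plan is to identify each of the rings $\wh R_P$, $\wh R_P^\circ$, and $\wh R_U$ explicitly, after which the formulas for the fraction fields follow. For $F_P$, I note that $R_P = \O_{\wh X, P}$ is a two-dimensional regular local ring with regular system of parameters $(q,t)$ --- since $P$ is a smooth point of the closed fiber $X = V(t) \subset \wh X$ and $q$ generates the maximal ideal of $P$ in $X$ --- and residue field $k' = k[x]/(q)$. The Cohen structure theorem in equicharacteristic zero then yields an isomorphism $\wh R_P \cong k'[[q,t]]$, so $F_P = k'((q,t))$. For $F_P^\circ$, I localize $\wh R_P = k'[[q,t]]$ at the height-one prime $t\wh R_P$ to obtain a DVR with uniformizer $t$ and residue field $\Frac(k'[[q]]) = k'((q))$; the $t$-adic completion of this DVR is $k'((q))[[t]]$ (by Cohen structure for complete DVRs in equicharacteristic zero), and hence $F_P^\circ = k'((q))((t))$.

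For $F_U$, the aim is to show $\wh R_U \cong k[q^{-1},xq^{-1},\ldots,x^{r-1}q^{-1}][[t]]$, and I would proceed in two steps. First, compute the coordinate ring of the affine curve $U = \P^1_k \setminus V(q)$: a rational function on $\P^1_k$ with its only pole along $V(q)$ has the form $f/q^n$ with $\deg f \leq rn$, and (taking $q$ monic) the relation $x^r = q - (\text{lower-degree terms})$ applied recursively expresses each such $f/q^n$ as a $k$-polynomial in $q^{-1}, xq^{-1}, \ldots, x^{r-1}q^{-1}$. Hence $\O_X(U) = k[q^{-1},\ldots,x^{r-1}q^{-1}]$. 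Second, identify $R_U/t^n R_U \cong \O_X(U)[t]/(t^n)$ for all $n \geq 1$. The reduction map $R_U \to \O_X(U)$, induced from $\O_{\wh X, P'} \twoheadrightarrow \O_{X, P'}$ at each closed point $P' \in U$, is surjective: the subring $T[q^{-1},\ldots,x^{r-1}q^{-1}] \subset R_U$ already surjects onto $\O_X(U)$ upon reduction modulo $t$. The kernel is precisely $tR_U$: if $f \in R_U$ reduces to zero, then $f \in t\O_{\wh X, P'}$ for every $P' \in U$, so $f/t \in \O_{\wh X, P'}$ for all such $P'$, i.e., $f/t \in R_U$, whence $f \in tR_U$. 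The same argument, applied iteratively (using that $t$ is a nonzerodivisor in each $\O_{\wh X, P'}$), yields the identification modulo $t^n$; passing to the inverse limit gives $\wh R_U \cong \O_X(U)[[t]]$ as required.

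The trickiest point is the kernel computation $\ker(R_U \to \O_X(U)) = tR_U$: the ring $R_U$ is generally strictly larger than the naive subring $T[q^{-1},\ldots,x^{r-1}q^{-1}] \subset F$ --- for instance, when $q = x$, the element $1/(x-t)$ lies in $R_U$ but not in $T[x^{-1}]$ --- so one must verify that none of these extra elements survive nontrivially modulo $t$. The argument above handles this locally: division by $t$ of any vanishing class takes place in each regular local ring $\O_{\wh X, P'}$, where $t$ is a nonzerodivisor, and the resulting quotients glue to give $f/t \in R_U$ as a single element of the function field $F$.
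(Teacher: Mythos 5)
Your proof is correct and follows essentially the same route as the paper's: identify $\wh R_P \cong k'[[q,t]]$ (the paper cites Serre's \emph{Local Fields} for the complete DVR $k'[[q]]$ where you invoke Cohen's structure theorem --- the same underlying fact), compute $\O_X(U)$ as the degree-bounded fractions $f/q^n$, and deduce the three fraction fields. The only real difference is that you supply a detailed verification that $\wh R_U \cong \O_X(U)[[t]]$ --- a step the paper asserts without comment --- and your pointwise kernel computation $\ker(R_U \to \O_X(U)) = tR_U$ there is correct.
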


\begin{proof} 
First note that the $(q)$-adic completion of $k[x]_{(q)}$ is a complete discrete 
valuation ring that is isomorphic as a $k[x]$-algebra to $k'[[q]]$ 
(\cite[II.4, Propositions~5,6]{Serre:LF}).  Thus $\wh R_P = k'[[q,t]]$, and the assertions about $F_P$ and $F_P^\circ$ follow.  The coordinate ring of $U$ is the 
subring of $k[x,q^{-1}]$ consisting of elements with no poles at $x=\infty$, and this is just $k[q^{-1}, xq^{-1},\dots,x^{r-1}q^{-1}]$.  So $\wh R_U = k[q^{-1}, xq^{-1},\dots,x^{r-1}q^{-1}][[t]]$, and $F_U$ is as asserted.
\end{proof}

The $k$-linear embedding $k[x] \hookrightarrow k'[[q]]$ given in the above proof
extends to a $k$-linear embedding $k(x) \hookrightarrow k'((q))$ between their fraction fields.
It follows that $F_U\subseteq 
k(x)((t))$ embeds into $F_P^\circ$ and hence there are inclusions 
$F_P,F_U\subseteq F_P^\circ$ as predicted.

We want to use fields of the above type in a differential context. 
To do that we  need to extend given derivations from rational function fields to the fields that occur in Proposition \ref{prop patching}. We start with a preliminary lemma. 

\begin{lem}\label{prop derivations}
Let $k$ be a field of characteristic zero and consider the iterated Laurent series field $F_0=k((w))((t))$. Then:
\begin{enumerate}
\item \label{del_a} For each $a\in F_0^\times$,
$$\del_a\colon F_0\to F_0, \ \sum \limits_{j=n}^\infty\sum\limits_{i=n_j}^\infty \alpha_{ij}w^it^j\mapsto a\cdot  \sum \limits_{j=n}^\infty\sum\limits_{i=n_j}^\infty i\alpha_{ij}w^{i-1}t^j$$ 
defines a derivation on $F_0$ with field of constants $k((t))$, extending $a\frac{\del}{\del w}$ on $k((w))$. 
\item\label{res1} If $a\in k((w,t))$, then $\del_a$ restricts to a derivation $\del_a\colon k((w,t))\to k((w,t))$; i.e., $\del_a(k((w,t)))\subseteq k((w,t))$.
\item \label{res3} If $B$ is a $k_0$-subalgebra of $k((w))$ for some subfield $k_0$ of $k$ and if $(b\cdot\frac{\del}{\del w})(B) \subseteq B$ for some non-zero $b\in k((w))$ with $a/b \in \Frac(B[[t]])$, then $\del_a$ restricts to a derivation on $\Frac(B[[t]])$.
\end{enumerate}
\end{lem}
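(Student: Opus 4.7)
The plan is to build $\del_a$ in two stages: first set up a coefficient-wise $w$-derivative $D$ on $F_0$, then recover $\del_a$ as $aD$. For part~(a), let $D\colon F_0\to F_0$ send $\sum_j c_j t^j$ (with $c_j\in k((w))$) to $\sum_j (\del c_j/\del w)\, t^j$, where $\del/\del w$ denotes the standard $k$-derivation on $k((w))$. Linearity and the Leibniz identity for $D$ follow by extracting $t$-coefficients and invoking the corresponding properties of $\del/\del w$ on $k((w))$. Then $\del_a = a D$ is again a derivation (scaling any derivation by a ring element yields a derivation), and unwinding the definitions recovers the double-series formula in the statement. Since $a\in F_0^\times$, the constants of $\del_a$ equal those of $D$; and $D$ kills $\sum_j c_j t^j$ exactly when every $c_j$ lies in $\ker(\del/\del w\mid_{k((w))})=k$, which is precisely $k((t))$.

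For part~(b), once (a) is in place it suffices to check that $\del/\del w$ stabilizes $k((w,t))\subseteq F_0$, because $\del_a = a\cdot(\del/\del w)$ and $a\in k((w,t))$ by hypothesis. By the quotient rule this reduces to the claim that $\del/\del w$ stabilizes $k[[w,t]]$; differentiating a power series $\sum_{i,j\ge 0}\alpha_{ij}w^i t^j$ with respect to $w$ yields $\sum_{i,j\ge 0} i\alpha_{ij}w^{i-1}t^j$, which still lies in $k[[w,t]]$.

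For part~(c), the decisive step is the factorization $\del_a = (a/b)\cdot (b\cdot \del/\del w)$. By hypothesis $(b\cdot \del/\del w)(B)\subseteq B$, so applying $b\cdot\del/\del w$ coefficient-wise to $f=\sum_j c_j t^j\in B[[t]]$ gives $\sum_j (b\cdot \del c_j/\del w)\, t^j\in B[[t]]$. Multiplying by $a/b\in \Frac(B[[t]])$ then produces an element of $\Frac(B[[t]])$, so $\del_a(B[[t]])\subseteq \Frac(B[[t]])$; the quotient rule extends this stability from $B[[t]]$ to $\Frac(B[[t]])$. The one conceptual point in the lemma is exactly this factorization: one cannot in general assume that $\del/\del w$ itself preserves $B$, but the auxiliary $b$ absorbs the defect, and the condition $a/b\in\Frac(B[[t]])$ is precisely what is needed to keep $\del_a$ well-defined on $\Frac(B[[t]])$. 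Everything else is bookkeeping in $F_0$.
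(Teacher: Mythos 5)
Your proposal is correct and follows essentially the same route as the paper: part (a) by reducing to the fact that $\frac{\del}{\del w}$ is a derivation on $k((w))$ with constants $k$ and then scaling by $a$, part (b) by first restricting to $k[[w,t]]$ and passing to the fraction field, and part (c) via the factorization $\del_a=(a/b)\cdot(b\,\frac{\del}{\del w})$ applied coefficient-wise, which is exactly the computation in the paper's proof.
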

\begin{proof}
\ref{del_a} This is immediate from the fact that $\frac{\del}{\del w}\colon k((w))\to k((w)), \  \sum\limits_{i=n}^\infty \alpha_{i}w^i\mapsto\sum\limits_{i=n}^\infty i\alpha_{i}w^{i-1}$ defines a derivation on $k((w))$ with field of constants $k$. 

\ref{res1} Since $\frac{\del}{\del w}$ restricts to a derivation $k[[w]]\to k[[w]]$, $\del_a$ restricts to a derivation $k[[w,t]]\to k((w,t))$ if $a\in k((w,t))$. Thus $\del_a$ restricts to a derivation on the fraction field $k((w,t))$ of $k[[w,t]]$. 

\ref{res3} Under the hypothesis of ~\ref{res3}, let $f=\sum _{i=0}^\infty b_it^i \in B[[t]]$. Then 
$$\del_a(f)=a\sum \limits_{i=0}^\infty\frac{\del}{\del w}(b_i)t^i=\frac{a}{b}\sum \limits_{i=0}^\infty(b\frac{\del}{\del w})(b_i)t^i \in \Frac(B[[t]])$$ 
and hence $\del_a$ restricts to a derivation on $\Frac(B[[t]])$. 
\end{proof}

\begin{Def}
A \textit{diamond with the factorization property} is a quadruple $(F,F_1,F_2,F_0)$ of fields with inclusions $F\subseteq F_1,F_2$ and $F_1,F_2\subseteq F_0$ such that $F_1\cap F_2=F$ and such that every matrix $A\in \GL_n(F_0)$ can be written as a product $A=B\cdot C$ with matrices $B\in \GL_n(F_2)$ and $C\in \GL_n(F_1)$. If in addition $\operatorname{char}(F)=0$ and all of these fields are equipped with a derivation that is compatible with the inclusions, then $(F,F_1,F_2,F_0)$ is called a \textit{differential diamond with the factorization property}. Note that we do \textit{not} require that $F, F_1,F_2,F_0$ all have the same field of constants. 
\end{Def}

\begin{prop}\label{prop diff diamond}
In the situation of Proposition \ref{prop patching}, 
let $a\in F^\times$ and
equip $F_P^\circ=k'((q))((t))$ with the derivation $\del_a$ as defined in Lemma~\ref{prop derivations}\ref{del_a} (taking $w=q$). Then $\del_a$ restricts to derivations on $F_P$ and $F_U$ and it restricts to the derivation $\beta\cdot\del/\del x$ on $F=k((t))(x)$, with $\beta=\left(\frac{\del q}{\del x}\right)^{-1}a $. With respect to these derivations, $(F,F_P,F_U,F_P^\circ)$ is a differential diamond with the factorization property.   
\end{prop}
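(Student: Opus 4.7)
The plan is to verify in turn that $\del_a$ restricts to $F_P$ and to $F_U$, that its restriction to $F$ equals $\beta\cdot\del/\del x$, and finally that the resulting quadruple is a differential diamond with the factorization property.

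By Proposition~\ref{prop patching}, $F_P=k'((q,t))$, which is precisely the situation of Lemma~\ref{prop derivations}\ref{res1} with the $k$ there replaced by $k'$ and with $w=q$. Since $a\in F\subseteq F_P$, that lemma yields $\del_a(F_P)\subseteq F_P$ at once.

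For $F_U=\Frac(B[[t]])$ with $B=k[q^{-1},xq^{-1},\dots,x^{r-1}q^{-1}]$, I would apply Lemma~\ref{prop derivations}\ref{res3} (again with the lemma's $k$ replaced by $k'$), taking $k_0=k$ and $b:=q'(x)=\del q/\del x\in k[x]\subseteq k'((q))$. The key chain-rule observation is that along the embedding $k(x)\hookrightarrow k'((q))$, the relation $q=q(x)$ forces $dx/dq=1/q'(x)$, so $\del/\del q$ stabilizes $k(x)$ and equals $(1/q'(x))\,\del/\del x$ on it; in particular $b\cdot\del/\del q=\del/\del x$ on $k(x)$. Two things then remain to check. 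First, $\del/\del x(B)\subseteq B$: this is immediate from the fact that $B$ is the ring of regular functions on $U=\P^1_k\setminus\{P\}$ together with the observation that $\del/\del x$ is a globally regular vector field on $\P^1_k$ (in the coordinate $u=1/x$ near $\infty$ it reads $-u^2\,\del/\del u$), so it acts as a derivation on $B$. Second, $a/b=a/q'(x)$ lies in $F\subseteq F_U$. Lemma~\ref{prop derivations}\ref{res3} then gives $\del_a(F_U)\subseteq F_U$.

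The restriction of $\del_a$ to $F$ comes from the same chain-rule calculation: $\del/\del q$ annihilates $k((t))$ and restricts to $(1/q'(x))\del/\del x$ on $k(x)$, so on $F=k((t))(x)$ we get $\del_a=a\cdot\del/\del q=(a/q'(x))\,\del/\del x=\beta\cdot\del/\del x$. Compatibility of all four derivations with the inclusions is automatic once this is known, since each of them arises as a restriction of the single derivation $\del_a$ on $F_P^\circ$. The underlying non-differential diamond with the factorization property for $(F,F_P,F_U,F_P^\circ)$ is precisely the patching datum recalled at the beginning of Section~\ref{setup} from \cite{HH} applied to $\wh X=\P^1_T$, and this completes the proof.

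The step I expect to be the main (minor) obstacle is the verification that $\del/\del x$ preserves $B$; the geometric interpretation of $\del/\del x$ as a globally regular vector field on $\P^1_k$ is the cleanest route, but an alternative is a direct check on the generators $x^iq^{-j}$ using that $\deg q'=r-1$ so that the Leibniz rule keeps one inside the stated generating set.
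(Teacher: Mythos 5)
Your proof is correct and follows essentially the same route as the paper's: Lemma~\ref{prop derivations}\ref{res1} for $F_P$, Lemma~\ref{prop derivations}\ref{res3} with $b=\del q/\del x$ for $F_U$, uniqueness of extensions of derivations along the finite separable extension $k(q)\subseteq k(x)$ (your ``chain rule'') to identify the restriction to $F$ with $\beta\,\del/\del x$, and the results of \cite{HH} for the underlying non-differential diamond (the paper cites Proposition~6.3 and Theorem~5.10 of \cite{HH} explicitly for $F_P\cap F_U=F$ and for the matrix factorization, so it is worth naming those rather than gesturing at the general setup). The one place you genuinely diverge is the verification that $(b\,\del/\del q)(B)\subseteq B$: the paper checks this by the direct computation $(b\frac{\del}{\del q})(\frac{x^i}{q})=\frac{ix^{i-1}}{q}-\frac{x^i}{q}\cdot\frac{\del q/\del x}{q}\in B$ on the generators, whereas you observe that $B=\O(U)$ and that $\del/\del x$ is a global section of the tangent sheaf of $\P^1_k$ (vanishing to order two at infinity, as your coordinate change $u=1/x$ shows), hence preserves $\O(V)$ for every open $V$. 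Both arguments are valid; the geometric one is cleaner and explains why the generator computation must succeed, at the cost of relying on the identification of $B$ with the coordinate ring of $U$, which the proof of Proposition~\ref{prop patching} does supply.
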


\begin{proof}
By \cite[Proposition~6.3]{HH}, $F_P\cap F_U=F$ as subfields of $F_P^\circ$.  Moreover, by \cite[Theorem~5.10]{HH}, every matrix $A\in \GL_n(F_P^\circ)$ can be written as a product $A=B\cdot C$, for some matrices $B\in \GL_n(F_P)$ and $C\in \GL_n(F_U)$.  So $(F,F_P,F_U,F_P^\circ)$ is a diamond with the factorization property, and the second assertion follows from the first. 

For the first assertion, note that 
the derivation $\del_a$ on $F_P^\circ$ and the derivation $\beta\cdot \del/\del x$ on $F$ are each constant on $k((t))$, and each maps $q$ to $a$; hence
$\del_a$ and $\beta\cdot\del/\del x$ restrict to the same derivation $k((t))(q)\to F$.
As $F=k((t))(x)$ is a finite separable extension of $k((t))(q)$, it follows that the derivation $\del_a$ on $F_P^\circ$ restricts to $\beta\cdot\del/\del x$ on $F$.
Moreover, $\del_a$ restricts to a derivation on $F_P=k'((q,t))$ by Lemma~\ref{prop derivations}\ref{res1}, again denoted by $\del_a$.
It is given by (the restriction of) the formula in Lemma~\ref{prop derivations}\ref{del_a}.

To conclude the proof, it remains to show that $\del_a$ restricts to a derivation on $F_U=\Frac(k[q^{-1}, xq^{-1},\dots,x^{r-1}q^{-1}][[t]])$.  
First note that $k(q)\subseteq k(x)\subseteq k'((q))$, where the last inclusion is as in the comment after Proposition~\ref{prop patching} (and is $k$-linear). Set $B=k\!\left[\frac{1}{q},\frac{x}{q},\dots,\frac{x^{r-1}}{q}\right]$. Then $B$ is a $k$-subalgebra  of $k(x)$ and thus it is a $k$-subalgebra of $k'((q))$.  Set $b=\frac{\del q}{\del x} \in k(x)$. Then $b\in k'((q))$ and $a/b=\beta\in F\subseteq F_U=\Frac(B[[t]])$. In order to apply Lemma~\ref{prop derivations}\ref{res3} (with $w=q$) we next check that $(b\frac{\del}{\del q})(B) \subseteq B$. As $\frac{\del}{\del x}(q)=b=(b\frac{\del}{\del q})(q)$, the derivation $\frac{\del}{\del x}$ on $k(x)$ and the derivation $b\frac{\del}{\del q}$ on $k'((q))$ both restrict to the same derivation $k(q)\to k(x)$. Using that $k(x)$  is a finite separable extension of $k(q)$, we conclude that $b\frac{\del}{\del q}$ restricts to $\frac{\del}{\del x}$ on $k(x)$. Hence for all $0\leq i < r$, $(b\frac{\del}{\del q})(\frac{x^i}{q})=\frac{ix^{i-1}}{q}-\frac{x^i}{q}\cdot \frac{\frac{\del}{\del x}(q)}{q}  \in B$ and we conclude that $(b\frac{\del}{\del q})(B) \subseteq B$. Hence $\del_a$ restricts to a derivation on $F_U$ by Lemma~\ref{prop derivations}\ref{res3}, as needed. 
\end{proof}

\begin{ex}\label{ex diff}
Let $k$ be a field of characteristic zero and $F=k((t))(x)$, $F_1=\Frac(k[x^{-1}][[t]])$, $F_2=k((x,t))$ and $F_0=k((x))((t))$. Then $\frac{\del}{\del x}$ canonically extends from $F$ to $F_1,F_2,F_0$ and $(F,F_1,F_2,F_0)$ is a differential diamond with the factorization property. Indeed, this is a special case of Proposition \ref{prop diff diamond} with $q=x$, i.e. $P$ is the origin on $\mathbb P^1_k$, and $a=\beta=1$.
\end{ex}

\section{Picard-Vessiot theory}\label{PVtheory}

Although classical differential Galois theory (e.g., as in \cite{vdPutSinger}) assumes that the field of constants in a differential field is algebraically closed, we will allow more general fields of constants, provided that the characteristic is zero (see \cite{Dyc} for details).

If $R$ is a differential ring, let $C_R$ denote its ring of constants. Let $F$ be a differential field with field of constants $K=C_F$ (which might not be algebraically closed). Consider a matrix $A\in F^{n\times n}$. A \textit{Picard-Vessiot ring} $R/F$ for the differential equation $\del(y)=Ay$ is a differential ring extension $R/F$ that satisfies the following conditions: there exists a matrix $Y\in \GL_n(R)$ with $\del(Y)=A\cdot Y$ and such that $R=F[Y,\det(Y)^{-1}]$ (i.e., $R$ is generated as an $F$-algebra by the entries of $Y$ and $\det(Y)^{-1}$), and such that $R$ is a simple differential ring with $C_R=C_F$. 
By differential simplicity, every Picard-Vessiot ring $R/F$ is an integral domain; moreover, 
$C_{\Frac(R)}=C_F$.  If $C_F$ is algebraically closed, then for every $A$ there is a unique Picard-Vessiot ring up to isomorphism (\cite[Prop.~1.18]{vdPutSinger}); but this is not the case for general fields of constants. 

Given a Picard-Vessiot ring $R/F$, for each $K$-algebra $S$ let $G(S)=\Aut^\del(R\otimes_K S/F\otimes_K S)$ be the group of differential $(F\otimes_K S)$-automorphisms of $R\otimes_K S$, where we consider $S$ as a differential ring with the trivial derivation.  Then $G$ is a functor $\Autd(R/F)$ from the category of $K$-algebras to the category of groups.  In fact, this functor  is a linear algebraic group, the {\em differential Galois group} of the Picard-Vessiot ring. By construction, its group of $K$-rational points is $\Aut^\del(R/F)$.  (When $K$ is algebraically closed, one commonly identifies $G$ with $\Aut^\del(R/F)$, since a $K$-scheme is determined by its $K$-points.)

If $G$ is a linear algebraic group over $K$ and $K'/K$ is a field extension, we let $G_{K'}$ denote the base change of $G$ from $K$ to $K'$. If $R/F$ is a Picard-Vessiot ring with differential Galois group $G$ and $K'/K$ is an algebraic extension of constants, then $R\otimes_K K'$ is a Picard-Vessiot ring over the field $F\otimes_K K'$ 
with differential Galois group $G_{K'}$ (here simplicity follows from \cite[Cor. 2.7]{Dyc}).

In order to apply patching to differential Galois theory, we want to embed Picard-Vessiot rings into Laurent series fields.  We will use the following:

\begin{prop}\label{prop embed pvr}
Let $F=k(x)$ be a rational function field over $k$ with derivation $\del=\frac{d}{dx}$ and let $R/F$ be a Picard-Vessiot ring. Then there exists an element $\alpha\in k$ and a finite extension $k'/k$ such that $R$ embeds into $k'((x-\alpha))$ as a differential $k(x)$-algebra. 	
\end{prop}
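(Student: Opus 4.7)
The plan is to embed $R$ into a Laurent series ring by working over the algebraic closure of $k$ and then descending the coefficient field. First I would choose $\alpha\in k$ at which every entry of the defining matrix $A\in k(x)^{n\times n}$ is regular; such an $\alpha$ exists because $k$ has characteristic zero (hence is infinite) and $A$ has only finitely many poles. Expanding $A$ in $k[[x-\alpha]]^{n\times n}$ and solving the initial value problem $\partial(Z)=AZ$, $Z(\alpha)=I$ by the usual inductive coefficient computation produces a fundamental solution $Z\in\GL_n(k[[x-\alpha]])$, defined already over $k$ itself.

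Next I would base-change to $\bar k$ in order to exploit the uniqueness of Picard--Vessiot rings over algebraically closed constants. By the fact recalled just before the proposition, $R\otimes_k\bar k$ is a Picard--Vessiot ring over $\bar k(x)$ for the same equation. The comparison ring $R':=\bar k(x)[Z,\det(Z)^{-1}]\subset\bar k((x-\alpha))$ is a domain whose constants are forced to equal $\bar k$ by the inclusion into $\bar k((x-\alpha))$, and it is generated by a fundamental solution, so the standard characterization of Picard--Vessiot rings over algebraically closed constant fields makes $R'$ one as well. Uniqueness then yields a differential $\bar k(x)$-isomorphism $R\otimes_k\bar k\cong R'$, and composing with the natural inclusion $R\hookrightarrow R\otimes_k\bar k$ (injective because $\bar k$ is a free $k$-module) produces a differential $k(x)$-embedding $\iota\colon R\hookrightarrow\bar k((x-\alpha))$.

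The last step is to cut the coefficient field down. Let $Y_R\in\GL_n(R)$ be a fundamental solution generating $R$ as an $F$-algebra, and set $\tilde Y:=\iota(Y_R)\in\GL_n(\bar k((x-\alpha)))$. Since $A$ is regular at $\alpha$, a comparison of leading orders shows that any matrix solution of $\partial(\tilde Y)=A\tilde Y$ in $\bar k((x-\alpha))^{n\times n}$ must already lie in $\bar k[[x-\alpha]]^{n\times n}$; evaluating at $\alpha$ then produces a matrix $\tilde Y(\alpha)\in\GL_n(\bar k)$ whose (finitely many) entries generate a finite extension $k'/k$. Uniqueness of the initial-value solution $Z$ forces $\tilde Y=Z\cdot \tilde Y(\alpha)\in\GL_n(k'[[x-\alpha]])$, and consequently $\iota(R)=k(x)[\tilde Y,\det(\tilde Y)^{-1}]\subseteq k'((x-\alpha))$, as required.

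The main obstacle, in my view, is verifying that $R'$ really is a Picard--Vessiot ring over $\bar k(x)$, so that the uniqueness theorem can be invoked: this relies on the equivalence, valid over algebraically closed constants but not in general, between simplicity and the conjunction of being a domain with the correct constants. That equivalence is precisely what makes the detour through $\bar k$ necessary, and it is what forces $k'$ to be only finite over $k$ rather than equal to $k$.
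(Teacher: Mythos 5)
Your proof is correct and follows the same overall strategy as the paper's: pick a regular point $\alpha\in k$, build a formal fundamental solution in $\GL_n(k[[x-\alpha]])$, use uniqueness of Picard--Vessiot rings over the algebraically closed constant field $\bar k$ to compare $R$ with the power-series-generated ring, and then descend to a finite extension $k'$. Where you genuinely diverge is the descent step: the paper observes that the differential isomorphism $R\otimes_k\bar k\cong R_0\otimes_k\bar k$ is already defined over some finite subextension $k'/k$ (a spreading-out argument) and then embeds $R_0\otimes_k k'$ into $k'((x-\alpha))$ using simplicity, whereas you identify $k'$ explicitly as the field generated by the entries of $\tilde Y(\alpha)$, using the relation $\tilde Y=Z\cdot\tilde Y(\alpha)$. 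Your version is more concrete and pins down $k'$; the paper's is shorter because it never needs to argue that $\tilde Y$ is holomorphic and invertible at $\alpha$. On that point, your claim that $\tilde Y(\alpha)\in\GL_n(\bar k)$ needs one more line: the order comparison gives $\tilde Y\in\bar k[[x-\alpha]]^{n\times n}$, but invertibility of $\tilde Y(\alpha)$ requires applying the same argument to $\tilde Y^{-1}$ (which satisfies $\del(\tilde Y^{-1})=-\tilde Y^{-1}A$) or to the Wronskian via $\del(\det\tilde Y)=\operatorname{tr}(A)\det\tilde Y$. Finally, your closing remark slightly misdiagnoses why the detour through $\bar k$ is needed: the criterion ``domain with no new constants in its fraction field $\Rightarrow$ simple'' holds over arbitrary constant fields of characteristic zero (this is exactly how the paper certifies $R_0$ as a Picard--Vessiot ring over $k(x)$ itself, citing Dyckerhoff's Corollary~2.7); what genuinely requires algebraically closed constants is the \emph{uniqueness} of the Picard--Vessiot ring, which is the step you correctly rely on.
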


\begin{proof}
Suppose $R$ is a Picard-Vessiot ring for the differential equation 
$\partial (y) = Ay$ for some matrix
$A\in F^{n\times n}$. Let $\alpha \in k$ be a regular point of the differential equation (i.e., the 
entries of $A$ have no poles at $\alpha$). Then there is a fundamental 
solution matrix $Y$ with
entries in $k[[x-\alpha]]$; the coefficients of the power series can be 
found recursively (see, e.g., the proof of [Dyc08, Theorem 2.8]). Let 
$R_0=F[Y,\det(Y)^{-1}]\subseteq k((x-\alpha))$. Thus
$C_{\Frac(R_0)}=k=C_F$; so it follows from [Dyc08, Corollary 2.7] that 
$R_0$ is simple and hence is a Picard-Vessiot ring for the differential 
equation $\partial(y)=Ay$.

Both $R\otimes_k \overline{k}$ and $R_0\otimes_k \overline{k}$ are Picard-Vessiot rings over $\overline{k}(x)$ for the same differential equation; hence they are isomorphic over $\overline{k}(x)$. Thus there is a finite extension $k'/k$ such that $R\otimes_k k'$ and $R_0\otimes_k k'$ are isomorphic over $k'(x)$. The inclusion $R_0\subseteq k((x-\alpha))$ yields a canonical differential homomorphism $R_0\otimes_k k' \to   k'((x-\alpha))$ which is injective since $R_0\otimes_k k'$ is a simple differential ring. So we obtain an embedding $R \hookrightarrow R\otimes_k k'\cong R_0\otimes_k k' \hookrightarrow k'((x-\alpha))$.
\end{proof}

The next lemma shows that in constructing Picard-Vessiot rings, one may modify the derivation by an element in the ground field. 

\begin{lem}\label{lem scale derivation}
Let $F_0$ be a field with a derivation $\del$, let $F$ be a differential subfield of $F_0$, and let $f \in F^\times$. If $(R,\del)$ is a Picard-Vessiot ring over $(F,\del)$ with differential Galois group $G$, then $\del':=f\del$ is a derivation on $R$ and $(R,\del')$ is a Picard-Vessiot ring over $(F,\del')$ with differential Galois group $G$. If moreover there is an $F$-linear differential embedding $(R,\del)\hookrightarrow (F_0,\del)$ then there is an $F$-linear differential embedding $(R,\del')\hookrightarrow (F_0,\del')$. 
\end{lem}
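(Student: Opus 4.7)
The plan is to use that $f \in F^\times$ is a unit, so rescaling the derivation by it affects essentially nothing in the Picard-Vessiot setup. First I would check that $\del' = f\del$ is indeed a derivation on $R$: the Leibniz rule is immediate since $f$ is just a scalar from the subring $F \subseteq R$. Then I would record three ``rescaling-invariance'' observations: (i) a subset $I \subseteq R$ is $\del$-stable iff it is $\del'$-stable, because $f$ is a unit; (ii) $\ker(\del) = \ker(\del')$, so the ring of constants of $R$ (and of $F$) is the same for both derivations; (iii) for any $F$-linear endomorphism $\sigma$ of $R$, one has $\sigma\circ\del = \del\circ\sigma$ iff $\sigma\circ\del' = \del'\circ\sigma$, because $\sigma$ commutes with multiplication by $f \in F$.

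Next I would translate the Picard-Vessiot data. If $R$ is a PV ring over $(F,\del)$ for $\del(y) = Ay$ with fundamental matrix $Y \in \GL_n(R)$ and $R = F[Y,\det(Y)^{-1}]$, then $\del'(Y) = fAY$, so the same matrix $Y$ is a fundamental solution for $\del'(y) = (fA)y$ over $(F,\del')$; the generation condition is unchanged, differential simplicity is preserved by (i), and equality of constants follows from (ii). For the differential Galois group, I would use the functorial definition recalled just before the lemma: the functor $S \mapsto \Aut^\del(R\otimes_K S/F\otimes_K S)$ and its analogue with $\del$ replaced by $\del'$ coincide by (iii) applied after base change, and the base field $K = C_F^\del = C_F^{\del'}$ is the same; so $\Autd$ with respect to $\del'$ is literally the same linear algebraic group $G$ over $K$, not merely an isomorphic one.

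For the final assertion, an $F$-linear differential embedding $\phi\colon (R,\del)\hookrightarrow (F_0,\del)$ automatically satisfies $\phi(\del'(r)) = \phi(f\del(r)) = f\phi(\del(r)) = f\del(\phi(r)) = \del'(\phi(r))$ for all $r \in R$, using only that $f \in F$ and $\phi$ is $F$-linear; hence the same map serves as a differential embedding for $\del'$. I do not anticipate any real obstacle here: rescaling by a nonzero element of the base field is essentially a cosmetic change, and the only point that requires care is making sure that the differential Galois group is recovered as the same group scheme over the same field of constants, which is exactly what observations (ii) and (iii) guarantee.
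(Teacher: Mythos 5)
Your proposal is correct, and the one minor difference from the paper is organizational rather than mathematical. For the first assertion the paper simply cites \cite[Lemma~4.2]{BHH} and its proof, whereas you verify everything directly: that $\del'=f\del$ satisfies Leibniz, that differential ideals, constants, and commuting automorphisms are unchanged under rescaling by the unit $f$, that the same fundamental matrix $Y$ solves $\del'(y)=(fA)y$, and that the group functor $S\mapsto\Aut^\del(R\otimes_K S/F\otimes_K S)$ is literally unchanged because $F\otimes_K S$-automorphisms fix $f\otimes 1$ and $K=C_F$ is the same for both derivations. This is exactly the content of the cited lemma, so your argument is a self-contained substitute for the citation; the only point to tighten is that in your observation (i) you should say ``ideal'' rather than ``subset,'' since the equivalence of $\del$- and $\del'$-stability uses that $I$ absorbs multiplication by $f$ and $f^{-1}$ (this is all that is needed, as it is applied only to differential ideals to transfer simplicity). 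For the second assertion your argument is identical to the paper's: an $F$-linear embedding fixes $f$, hence commutes with $\del'$ whenever it commutes with $\del$.
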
 

\begin{proof}
The first assertion was shown in \cite[Lemma~4.2]{BHH} and its proof.  For the second assertion, 
if $\phi\colon R\to F_0$ is an injective ring homomorphism that restricts to the identity on $F$ and commutes with $\del$, then $\phi(f)=f$, and so $\phi$ also commutes with $\del'$.
\end{proof}

The following theorem asserts the existence of a Picard-Vessiot ring $R$ over the field $k((t))(z)$ with specified differential Galois group, 
such that $R$ can be embedded into $k((t,z))$ as a differential subring.

\begin{thm}\label{thm inverse problem rational function field}
Let $k$ be a field of characteristic zero, and let $F=k((t))(z)$ be a rational function field over $k((t))$ equipped with the derivation $\del=a\cdot \frac{d}{dz}$ for a fixed non-zero 
$a \in k((t))(z)$.  Let $G$ be a linear algebraic group defined over $k((t))$.  Then there is a 
Picard-Vessiot ring $R/F$ with differential Galois group $G$ and an $F$-linear differential embedding $R \hookrightarrow k((t,z))$, where $k((t,z))$ is equipped with the derivation $\del_a$ as in Lemma~\ref{prop derivations}\ref{res1}.
\end{thm}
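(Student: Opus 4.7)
First, applying Lemma \ref{lem scale derivation} with $f = a$, it suffices to treat the case $a = 1$ with $\partial = d/dz$: any PV ring $R$ over $(F, d/dz)$ with group $G$ and $F$-linear differential embedding $R \hookrightarrow (k((t,z)), d/dz)$ automatically yields a PV ring over $(F, \partial)$ with group $G$ and an embedding that is differential with respect to $\del_a = a \cdot d/dz$ on $k((t,z))$ (because the embedding is $F$-linear and $a \in F$ is preserved).

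With $a = 1$, I invoke the main result of \cite{BHH}, which gives a PV ring over $F = k((t))(z)$ with group $G$ realized by some equation $\partial y = A y$ with $A \in F^{n \times n}$. To produce an embedding into $k((t,z))$, I mimic the proof of Proposition \ref{prop embed pvr}: if one can arrange $A \in k[[t,z]]^{n \times n}$ (writing $A = \sum_i A_i z^i$ with $A_i \in k[[t]]^{n \times n}$), then the recursion $Y_0 = I$ and $(j+1) Y_{j+1} = \sum_{i + i' = j} A_i Y_{i'}$ produces $Y_j \in k[[t]]^{n \times n}$ by induction (using characteristic zero to divide by $j+1$), so $Y \in \GL_n(k[[t,z]])$ satisfies $\partial Y = A Y$. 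Setting $R := F[Y, \det(Y)^{-1}] \subseteq k((t,z))$, we have $C_R = C_F = k((t))$ (since $C_{k((t,z))} = k((t))$ under $d/dz$, via the inclusion $k((t,z)) \hookrightarrow k((t))((z))$), so $R$ is simple by \cite[Cor.~2.7]{Dyc}, hence a PV ring for $\partial y = A y$.

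The main obstacle is twofold: first, realizing the group $G$ by a matrix $A$ with entries in $k[[t,z]]$; and second, verifying that the differential Galois group of the resulting $R$ is exactly $G$ rather than a proper $k((t))$-form of $G$ (a genuine subtlety, since two PV rings for the same equation over a constant field that is not algebraically closed need not be isomorphic). I expect to address both simultaneously by tracking the patching-based construction in \cite{BHH}, which takes place inside the differential diamond of Example \ref{ex diff} with $F_P = k((t,z))$: the PV ring built there should come equipped with a fundamental solution matrix naturally in $\GL_n(F_P)$, so that the $R$ above is the same PV ring as the one produced in \cite{BHH} and therefore has group $G$. If instead the matrix from \cite{BHH} does not directly have entries in $k[[t,z]]$, an additional $F$-gauge transformation $Y \mapsto BY$ with $B \in \GL_n(F)$ (which preserves both the PV ring and its group) should suffice to bring it into this form before applying the recursion.
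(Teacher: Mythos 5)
Your opening reduction via Lemma~\ref{lem scale derivation} is sound, but normalizing to $a=1$ is where the trouble starts; the paper instead normalizes to $a=t^{-1}$, the derivation actually used in the patching construction of \cite{BHH}, and that choice is not cosmetic. The genuine gap is the step where you assume the equation can be arranged to have $A\in k[[t,z]]^{n\times n}$. Your recursion really does need the coefficients $A_i$ to lie in $k[[t]]^{n\times n}$ (mere regularity at $z=0$ would only put $Y$ in $k((t))[[z]]$, which is \emph{not} contained in $k((t,z))$), and this $t$-integrality cannot in general be achieved by a gauge transformation $Y\mapsto BY$ with $B\in\GL_n(F)$: such a transformation does not change the Picard--Vessiot ring, and there are equations realizing a given group over $(F,d/dz)$ whose Picard--Vessiot ring does not embed into $(k((t,z)),d/dz)$ at all. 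For instance, for $y'=t^{-1}y$ (which realizes $\Gm$), any differential embedding of its Picard--Vessiot ring into $k((t,z))$ would produce $u\in k((t,z))^\times$ with $u'/u=t^{-1}$; this is impossible, since logarithmic derivatives with respect to $d/dz$ of elements of $k((z))((t))^\times$ have non-negative $t$-adic valuation. So for $a=1$ one must change the equation, not merely gauge it, and nothing in your proposal produces the required $t$-integral equation together with a proof that its Picard--Vessiot ring has group exactly $G$.

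Your fallback --- tracking the construction of \cite{BHH} and observing that its fundamental matrix already lies in $\GL_n(k((t,z)))$ --- is in substance the paper's proof, and it also disposes of your (legitimate) worry about $k((t))$-forms of $G$, because one then uses the Picard--Vessiot ring of \cite{BHH} itself rather than a second ring for the same equation. But that construction works for the derivation $t^{-1}\,d/dz$: the extra factor of $t$ in $dY/dz=tAY$ is exactly what makes the recursive solution converge $t$-adically into $k[[z,t]]$ (this is the content of the proof of \cite[Corollary~4.6]{BHH}). The paper accordingly quotes \cite[Theorem~4.5]{BHH} and the proof of its Corollary~4.6 to obtain $(R,\del')\hookrightarrow(k((t,z)),\del_{t^{-1}})$ with group $G$, and then applies Lemma~\ref{lem scale derivation} with $f=ta$ to pass to $\del_a$. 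To repair your write-up, replace the normalization $a=1$ by $a=t^{-1}$ and cite the embedding statement of \cite{BHH} directly, rather than attempting the recursion and gauge transformation yourself.
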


\begin{proof}
Theorem~4.5 of \cite{BHH} shows that there is a Picard-Vessiot ring
$R/k((t))(z)$ with differential Galois group $G$.  The proof there used
the derivation $\del=a\cdot \frac{d}{dz}$ with $a=t^{-1}$.  In that
situation, the proof of Corollary~4.6 of loc.\ cit.\ then shows that $R$
can be chosen to be contained in $k((t,z))$.

For general $a$, define the derivation $\del'=\frac{1}{ta}\del=t^{-1}\cdot \frac{d}{dz}$ on $F$. By the previous paragraph, there is a Picard-Vessiot ring $(R,\del')$ over $(F,\del')$ with differential Galois group $G$ and an $F$-linear differential embedding $(R,\del')\hookrightarrow (k((t,z)),\del_{t^{-1}})$. By Lemma \ref{lem scale derivation}, 
$\del:=ta\cdot \del'$ defines a derivation on $R$, and $(R,\del)$ is a Picard-Vessiot ring over $(F,\del)$ with differential Galois group $G$. Moreover, there is an $F$-linear differential embedding
$$(R,\del)\hookrightarrow  (k((t,z)),(ta)\cdot\del_{t^{-1}}).$$
 The assertion now follows from the equality $(ta)\cdot\del_{t^{-1}}=\del_a$. 
\end{proof}

\section{The inverse differential Galois problem over $k((x,t))$}\label{sec inverse}

In this section, we solve the inverse differential Galois problem over $k((x,t))$
with respect to the derivation $\del/\del x$.  More precisely, we equip the field $k((x))((t))$ with the derivation 
$\del=\del_1$ defined in Lemma~\ref{prop derivations}\ref{del_a} (with $w=x$); and we also write $\del$ for the restriction of this derivation to $k((x,t))$, as in Lemma~\ref{prop derivations}\ref{res1}.  These derivations extend $\del/\del x$ on $F=k((t))(x)$.

The following lemma from \cite{BHHW} allows us to lift certain Picard-Vessiot rings $R/F$ to  Picard-Vessiot rings over a differential field $F_2\supseteq F$, while preserving the differential Galois group. This will be a major ingredient in the proof of Theorem~\ref{thm inverse problem} below.

\begin{lem}[\cite{BHHW}, Lemma~2.9]\label{lem lifting PVR}
Let $F\subseteq F_1,F_2\subseteq F_0$ be differential fields of characteristic zero such that $F_1\cap F_2=F$ and $C_{F_0}=C_{F}$. Assume that $R/F$ is a Picard-Vessiot ring such that $R$ is a differential $F$-subalgebra of $F_1$. Let $G$ be its differential Galois group. Then the compositum $F_2R\subseteq F_0$ is a Picard-Vessiot ring over $F_2$ with differential Galois group $G$.
\end{lem}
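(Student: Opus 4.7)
The plan is to verify that $F_2R$ satisfies the defining properties of a Picard--Vessiot ring over $F_2$ for the same linear differential equation that defines $R/F$, and then to use the Galois correspondence together with the diamond hypothesis $F_1\cap F_2=F$ to identify the differential Galois group.

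I would begin by pinning down the constants. Since $F\subseteq F_2\subseteq F_0$ and $C_{F_0}=C_F$, one gets $C_{F_2}=C_F$, and likewise $C_{\Frac(F_2R)}=C_F$ from $\Frac(F_2R)\subseteq F_0$. Writing $R=F[Y,\det(Y)^{-1}]$ for a fundamental solution matrix $Y\in\GL_n(R)$ of some equation $\del(y)=Ay$ with $A\in F^{n\times n}$, we have $F_2R=F_2[Y,\det(Y)^{-1}]$, generated over $F_2$ by the same $Y$. Since $F_2R$ is a domain (being a subring of the field $F_0$) and the constants of its fraction field coincide with $C_{F_2}$, a standard simplicity criterion in Picard--Vessiot theory (e.g.\ \cite[Cor.~2.7]{Dyc}) shows that $F_2R$ is a Picard--Vessiot ring over $F_2$ for $\del(y)=Ay$. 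Let $H$ denote its differential Galois group. Any differential $F_2$-automorphism $\sigma$ of $F_2R$ sends $Y$ to $YM$ for some $M\in\GL_n(C_F)$, and therefore preserves $R=F[Y,\det(Y)^{-1}]$; restricting $\sigma$ to $R$ yields a closed embedding $H\hookrightarrow G$ of $C_F$-group schemes.

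The crux, and the step that genuinely uses $F_1\cap F_2=F$, is to upgrade this embedding to an equality $H=G$. By the Galois correspondence applied to $F_2R/F_2$ with group $H$, the differential invariants satisfy $\Frac(F_2R)^H=F_2$. Since $H$ preserves $\Frac(R)\subseteq\Frac(F_2R)$, intersecting gives
$$\Frac(R)^H=\Frac(R)\cap\Frac(F_2R)^H=\Frac(R)\cap F_2.$$
Because $R\subseteq F_1$ implies $\Frac(R)\subseteq F_1$, this intersection lies in $F_1\cap F_2=F$; combined with the obvious reverse inclusion we get $\Frac(R)^H=F$. The Galois correspondence for $R/F$ with group $G$ now forces $H=G$.

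The main subtlety I expect is the careful application of the Galois correspondence over a constant field $C_F$ that need not be algebraically closed: one must work scheme-theoretically in the framework of Picard--Vessiot theory over general constants (as in \cite{Dyc} or \cite{amano-masuoka}), which characterizes closed subgroup schemes of $G$ by their differential invariants. Once that framework is in place, the algebraic manipulations with $Y$, the containment chain of constants, and the diamond identity $F_1\cap F_2=F$ combine as indicated.
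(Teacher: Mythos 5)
The paper does not actually reprove this statement---it is imported verbatim from \cite{BHHW}, Lemma~2.9---but your argument is correct and runs along the same lines as the proof there: the hypothesis $C_{F_0}=C_F$ forces $C_{\Frac(F_2R)}=C_{F_2}$, so $F_2R=F_2[Y,\det(Y)^{-1}]$ is a Picard--Vessiot ring by the simplicity criterion of \cite[Cor.~2.7]{Dyc}; restriction of automorphisms gives a closed immersion $H\hookrightarrow G$; and the diamond condition $F_1\cap F_2=F$ combined with the Galois correspondence (valid scheme-theoretically over non-algebraically-closed constants in characteristic zero) yields $\Frac(R)^H=\Frac(R)\cap F_2\subseteq F_1\cap F_2=F$, whence $H=G$. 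I see no gaps.
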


\begin{lem} \label{lem emb}
	Define $\Phi:k[[z,t]]\to k[x^{-1}][[t]]$ by $$\Phi(f):=\sum \limits_{k=0}^\infty\left(\sum\limits_{i=0}^k\alpha_{i,k-i}x^{-i} \right)t^k \text{ for } f=\sum \limits_{i,j=0}^\infty\alpha_{ij}z^it^j \in k[[z,t]].$$ Then $\Phi$ defines an injective ring homomorphism which maps $z$ to $t/x$. It extends uniquely to an embedding $k((z,t))\hookrightarrow \Frac(k[x^{-1}][[t]])$.  
This embedding is a $k((t))(z)$-algebra homomorphism, where we regard $\Frac(k[x^{-1}][[t]])$
as a $k((t))(z)$-algebra via $z \mapsto t/x$.
\end{lem}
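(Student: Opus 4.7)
The strategy is to interpret $\Phi$ as the formal substitution $z \mapsto t/x$, $t \mapsto t$, and to verify directly that this substitution behaves well. First I would check well-definedness of the formula: for each fixed $k$, the sum $\sum_{i=0}^k \alpha_{i,k-i} x^{-i}$ has finitely many terms and defines a polynomial in $x^{-1}$, so $\Phi(f)$ indeed lies in $k[x^{-1}][[t]]$. The assertion $\Phi(z)=t/x$ is immediate by applying the formula to $f=z$ (only $\alpha_{1,0}=1$ is nonzero).

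Next I would verify that $\Phi$ is a ring homomorphism. The cleanest route is to give $k[[z,t]]$ the $(z,t)$-adic topology and $k[x^{-1}][[t]]$ the $t$-adic topology, and observe that $\Phi$ sends the ideal $(z,t)^n$ into $(t)^n$, since both $z$ and $t$ are mapped to elements of positive $t$-adic valuation. So $\Phi$ is continuous. On the dense subring $k[z,t]$ it agrees with the polynomial substitution $z\mapsto t/x$, $t\mapsto t$, which is a ring homomorphism by the universal property of polynomial rings; continuity and $t$-adic completeness of the target then force the multiplicativity of $\Phi$ on all of $k[[z,t]]$. Alternatively one can match the coefficient of $t^p$ in $\Phi(fg)$ and in $\Phi(f)\Phi(g)$ directly, and both reduce to $\sum_{i+j+k+l=p}\alpha_{ij}\beta_{kl}x^{-(i+k)}$.

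For injectivity, suppose $\Phi(f)=0$. Then for every $k\ge 0$ the element $\sum_{i=0}^k \alpha_{i,k-i} x^{-i}$ vanishes in $k[x^{-1}]$, and since $\{x^{-i}\}_{i\ge 0}$ is $k$-linearly independent, every $\alpha_{i,k-i}$ is zero. Letting $k$ range over $\N$ gives $f=0$. Since both $k[[z,t]]$ and $k[x^{-1}][[t]]$ are integral domains, the injective ring homomorphism $\Phi$ extends uniquely to an embedding $k((z,t))=\Frac(k[[z,t]])\hookrightarrow \Frac(k[x^{-1}][[t]])$ by $f/g \mapsto \Phi(f)/\Phi(g)$; uniqueness is automatic since any extension must act this way on quotients.

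Finally, to check that the extension is a $k((t))(z)$-algebra homomorphism, I would note that $k[[t]][z]\subseteq k[[z,t]]$ and that $z$ is transcendental over $k((t))$, so $k((t))(z)=\Frac(k[[t]][z])\subseteq k((z,t))$; restricting the extended embedding to this subfield sends $z\mapsto t/x$ and $t\mapsto t$ and fixes $k$, which is precisely the $k((t))(z)$-algebra structure placed on $\Frac(k[x^{-1}][[t]])$ in the statement. I do not anticipate a substantive obstacle; the only point that needs care is the topological or combinatorial bookkeeping in verifying that $\Phi$ respects multiplication, together with the easy but crucial observation that $z$ remains transcendental over $k((t))$ inside $k((z,t))$.
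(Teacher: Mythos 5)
Your proof is correct. The paper's own proof is exactly your ``alternative'' route: it declares injectivity and additivity to be obvious and then verifies multiplicativity by writing out the coefficient of $t^k$ in $\Phi(fg)$ and in $\Phi(f)\Phi(g)$ and matching the double sums via the reindexing $i=i_1+i_2$, $k_1=i_1+j_1$; the remaining assertions are dismissed as immediate. Your primary route is a genuine (if mild) repackaging: since $\Phi$ sends $(z,t)^n$ into $(t)^n$, it is additive and $t$-adically continuous, agrees with the substitution homomorphism $z\mapsto t/x$ on the dense subring $k[z,t]$, and the target $k[x^{-1}][[t]]$ is $t$-adically separated and complete with continuous multiplication, so multiplicativity propagates to all of $k[[z,t]]$. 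This buys you freedom from the double-sum bookkeeping at the cost of a few topological sanity checks (Hausdorffness of the target so that limits are unique, and continuity of multiplication), all of which hold here. Your treatment of the last two assertions is also slightly more careful than the paper's ``immediate'': in particular you correctly identify $k((t))(z)$ with $\Frac(k[[t]][z])\subseteq k((z,t))$ and observe that $\Phi$ restricts to the identity on $k[[t]]$, which is the substance behind the algebra-structure claim.
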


\begin{proof}
Obviously, $\Phi$ is injective and additive. To see that it is multiplicative, take $f=\sum \limits_{i,j=0}^\infty\alpha_{ij}z^it^j$ and $g=\sum \limits_{i,j=0}^\infty\beta_{ij}z^it^j \in k[[z,t]]$. Then 
$$\Phi(fg)=\sum\limits_{k=0}^\infty\left(\sum\limits_{i=0}^k\sum \limits_{i_1=0}^i\sum \limits_{j_1=0}^{k-i}\alpha_{i_1,j_1}\beta_{i-i_1,k-i-j_1}x^{-i} \right)t^k,$$ whereas 
$$\Phi( f)\cdot \Phi(g)=\sum\limits_{k=0}^\infty\left(\sum\limits_{k_1=0}^k\sum \limits_{i_1=0}^{k_1}\sum \limits_{i_2=0}^{k-k_1}\alpha_{i_1,k_1-i_1}\beta_{i_2,k-k_1-i_2}x^{-i_1-i_2} \right)t^k $$ and it is easy to check that these two expressions coincide (via $i=i_1+i_2$ and $k_1=i_1+j_1$). The last two assertions are then immediate.
\end{proof}

\begin{thm}\label{thm inverse problem}
Let $k$ be a field of characteristic zero and let $G$ be a linear algebraic group over $k((t))$. View $k((x,t))\subseteq k((x))((t))$ as differential fields as explained above. Then there exists a Picard-Vessiot ring over $k((x,t))$ with differential Galois group $G$. Moreover, this Picard-Vessiot ring can be constructed in such a way that it embeds into $k((x))((t))$ as a differential $k((x,t))$-subalgebra.
\end{thm}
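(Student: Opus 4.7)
The plan is to apply Lemma~\ref{lem lifting PVR} to the differential diamond
$$(F,F_1,F_2,F_0)\;=\;\bigl(k((t))(x),\;\Frac(k[x^{-1}][[t]]),\;k((x,t)),\;k((x))((t))\bigr)$$
from Example~\ref{ex diff}. The hypotheses hold: $F_1\cap F_2=F$ is part of that example, and $C_{F_0}=k((t))=C_F$ by Lemma~\ref{prop derivations}\ref{del_a} with $w=x$, $a=1$. So it suffices to construct a Picard-Vessiot ring $R$ over $F$ with differential Galois group $G$ which is a differential $F$-subalgebra of $F_1$: Lemma~\ref{lem lifting PVR} will then provide the compositum $F_2\cdot R\subseteq F_0$ as a Picard-Vessiot ring over $F_2=k((x,t))$ with group $G$, already embedded into $F_0=k((x))((t))$ as a differential $k((x,t))$-subalgebra. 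This gives both conclusions of the theorem at once.

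\textbf{Constructing $R$ via the coordinate $z=t/x$.} A direct application of Theorem~\ref{thm inverse problem rational function field} in the variable $x$ would place the Picard-Vessiot ring inside $k((x,t))=F_2$ instead of inside $F_1$, which is the wrong side of the diamond. To land in $F_1$, I would pass to the local coordinate $z=t/x$ at infinity via Lemma~\ref{lem emb}. Concretely, take $a=-z^2/t\in k((t))(z)^{\times}$ and equip $k((t))(z)$ with the derivation $\del=a\cdot \tfrac{d}{dz}$. By Theorem~\ref{thm inverse problem rational function field}, there is a Picard-Vessiot ring $R_0$ over $(k((t))(z),\del)$ with differential Galois group $G$, together with a $k((t))(z)$-linear differential embedding $R_0\hookrightarrow (k((z,t)),\del_a)$. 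Composing with the embedding $\Phi\colon k((z,t))\hookrightarrow \Frac(k[x^{-1}][[t]])=F_1$ from Lemma~\ref{lem emb}, which sends $z$ to $t/x$ and thereby identifies $k((t))(z)$ with $k((t))(t/x)=k((t))(x)=F$, I would set $R:=\Phi(R_0)\subseteq F_1$. Since $\Phi$ is injective, $R_0\to R$ is an isomorphism of differential rings, so $R$ is simple, finitely generated over $F$ in the required way, and has constants $\Phi(k((t)))=k((t))=C_F$; hence $R$ is a Picard-Vessiot ring over $F$ with differential Galois group $G$.

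\textbf{Main technical point.} The step on which the whole argument hinges is that $\Phi$ is actually a morphism of differential fields from $(k((z,t)),\del_{-z^2/t})$ to $(F_1,\del_1)$. On the generators one checks directly that
$$\del_1(\Phi(z))=\del_1(t/x)=-t/x^2=\Phi(-z^2/t)=\Phi(\del_a(z)) \quad\text{and}\quad \del_1(\Phi(t))=0=\Phi(\del_a(t)).$$
Using the explicit power-series formula for $\Phi$ in Lemma~\ref{lem emb} and the formula for $\del_a$ in Lemma~\ref{prop derivations}\ref{del_a}, a direct coefficient-by-coefficient comparison shows that $\del_1\circ\Phi=\Phi\circ\del_a$ on $k[[z,t]]$, and the quotient rule extends this to $k((z,t))=\Frac(k[[z,t]])$. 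The verification is a bookkeeping exercise, but it is the only substantive thing to check; once it is in place, the embedding $R_0\hookrightarrow (k((z,t)),\del_a)\xrightarrow{\Phi}(F_1,\del_1)$ is differential, $R=\Phi(R_0)\subseteq F_1$ is as required, and Lemma~\ref{lem lifting PVR} completes the proof.
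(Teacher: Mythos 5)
Your proposal is correct and follows essentially the same route as the paper: the same differential diamond from Example~\ref{ex diff}, the same change of coordinate $z=t/x$ with $a=-z^2/t$ so that Theorem~\ref{thm inverse problem rational function field} places the Picard--Vessiot ring inside $k((z,t))$, the same use of $\Phi$ from Lemma~\ref{lem emb} to move it into $F_1$, and the same appeal to Lemma~\ref{lem lifting PVR}. You correctly isolate the one substantive verification --- that $\Phi$ intertwines $\del_{-z^2/t}$ with $\del_1$ --- which the paper carries out by the explicit coefficient computation you describe.
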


\begin{proof}
Define $F=k((t))(x)$, $F_1=\Frac(k[x^{-1}][[t]])$, $F_2=k((x,t))$, $F_0=k((x))((t))$. Then $F\subseteq F_1,F_2\subseteq F_0$ is a differential diamond by Example~\ref{ex diff}. 
In particular, $F_1\cap F_2=F$.
Moreover, $C_{F_0}=C_F$ by Lemma~\ref{prop derivations}\ref{del_a}. Our goal is to construct a Picard-Vessiot ring over $F_2$ with differential Galois group $G$ using Lemma~\ref{lem lifting PVR}.

Set $z=t/x\in F$ and $a=\del(z)={-z^2}/{t}$. Then $F=k((t))(z)$ and the derivation $\del$ on $F$ equals $a\frac{d}{dz}$. Also, $F\subseteq k((t,z))\subseteq k((z))((t))$ and these are inclusions of differential fields, where we equip $k((z))((t))$ with the derivation $\del_a$ as in Lemma~\ref{prop derivations}\ref{del_a} (applied to $w=z$). Note that $\del_a$ restricts to $k((z,t))$ by Lemma~\ref{prop derivations}\ref{res1}. 

By Theorem \ref{thm inverse problem rational function field} we obtain a Picard-Vessiot ring $R/F$ with differential Galois group $G$ together with an embedding $R  \hookrightarrow k((z,t))$ of  differential $F$-algebras.

Lemma~\ref{lem emb} provides an injective $F$-algebra homomorphism $\Phi:k((z,t))\hookrightarrow F_1$. We claim that this is a differential embedding.  It suffices to show that $\del(\Phi(f))=\Phi(\del_a(f))$ for all $f\in k[[z,t]]$. Let $f\in k[[z,t]]$. Then $f$ can be written as $f=\sum\limits_{i,j=0}^\infty\alpha_{ij}z^it^j$ and 
$$\del_a(f)=a\sum\limits_{i,j=0}^\infty i\alpha_{ij}z^{i-1}t^j=-\frac{z}{t}\sum\limits_{i,j=0}^\infty i\alpha_{ij}z^it^j.$$ Hence 
$$\Phi(\del_a(f))=-\frac{1}{x}\sum\limits_{k=0}^\infty\left(\sum\limits_{i=0}^k i\alpha_{i,k-i}x^{-i}\right)t^k=\sum\limits_{k=0}^\infty\left(\sum\limits_{i=0}^k (-i)\alpha_{i,k-i}x^{-i-1}\right)t^k,$$ and the latter equals $\del(\Phi( f))$, as claimed. 

We conclude that $R$ embeds into $F_1$ as a differential $F$-algebra, 
and hence also into $F_0=k((x))((t))$. Therefore, the compositum $RF_2$ of $R$ and $F_2$ in $F_0$ is a differential $F_2$-subalgebra of $F_0$.  By Lemma \ref{lem lifting PVR}, it is a
Picard-Vessiot ring over $F_2$ with differential Galois group $G$. 
\end{proof}

\begin{rem} \label{remark step 3}
The conclusion of Theorem~\ref{thm inverse problem} remains true
if for some $a\in k((x,t))$ we instead equip $k((x,t))$ and $k((x))((t))$ with the derivation $\del_a$ as defined in Lemma \ref{prop derivations} (applied to $w=x$).  This follows from Lemma~\ref{lem scale derivation}, because $\del_a=a\cdot \del_1$.
\end{rem}

\section{Differential embedding problems }\label{sec: diffebp}
In this section we solve split embedding problems over $k((t))(x)$ which are induced from $k(x)$ (Theorem~\ref{main thm} below). In the related manuscript \cite{BHHP}, it is shown that this implies the solvability of all split differential embedding problems over $K(x)$, when $K$ is any large field of infinite transcendence degree over $\Q$. In particular, one obtains the solvability of all split differential embedding problems over $k((t)(x)$ for any field $k$ of characteristic zero.

Let $F$ be a differential field with field of constants $K$. A \textit{differential embedding problem over $F$} consists of an exact sequence $1\to N \to G \to H \to 1$ of linear algebraic groups over $K$ and a Picard-Vessiot ring $R/F$ with differential Galois group $H$. If the exact sequence splits, i.e., if $G\cong N\rtimes H$, the differential embedding problem is called \textit{split}. In this case, we write $(N\rtimes H,R)$ for the embedding problem. In the special case that $F=K(x)$ with derivation $\frac{d}{dx}$, we call a split differential embedding problem $(N\rtimes H,R)$ a \textit{split $(K(x)/k(x))$-differential embedding problem} for some subfield $k\subseteq K$, if it is {\em induced} from a split differential embedding problem $(N_0\rtimes H_0,R_0)$ over $k(x)$. That is, the base changes of linear algebraic groups $N_0, H_0, G_0$ from $k$ to $K$ are isomorphic to $N,H,G$, respectively; these isomorphisms are compatible with the structure of the semidirect product; and $R_0\otimes_{k(x)} K(x)\cong R$ as differential $K(x)$-algebras.

A \textit{proper solution} to a differential embedding problem consists of a Picard-Vessiot ring $S/F$ with differential Galois group $G$ together with a differential embedding $R\hookrightarrow S$ such that the following diagram commutes:
\[\xymatrix{ G  \ar@{->}[d]^\cong \ar@{->>}[rr]  && H \ar@{->}[d]_\cong \\
	\underline{\Aut}^\del(S/F) \ar@{->>}[rr]^{\operatorname{res}}&    & \underline{\Aut}^\del(R/F)} \]

To obtain our main theorem we will use the next result, which appeared in \cite{BHHW} (Theorem~2.14 there), and which was proven by patching differential torsors.

\begin{thm}\label{thm BHHW}
Let $(F,F_1,F_2,F_0)$ be a differential diamond with the factorization property and let $(N\rtimes H, R)$ be a split differential embedding problem over $F$ with the property that $R$ is a differential $F$-subalgebra of $F_1$. Assume further that there is a Picard-Vessiot ring $R_1/F_1$ with differential Galois group $N_{C_{F_1}}$, such that $R_1$ is a differential $F_1$-subalgebra of $F_0$. Then there exists a proper solution to the differential embedding problem $(N\rtimes H, R)$ over $F$.
\end{thm}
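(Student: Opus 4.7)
The plan is to construct Picard-Vessiot rings $T_1/F_1$ and $T_2/F_2$ whose differential Galois groups are the appropriate base changes of $G=N\rtimes H$, to produce a compatible isomorphism of their pullbacks to $F_0$ as differential $G$-torsors, and then to descend to a Picard-Vessiot ring $S/F$ with group $G$ by a patching theorem for torsors under linear algebraic groups, using the factorization property of the diamond. The embedding $R\hookrightarrow S$ will be built into both local pieces so that it survives descent.

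On the $F_1$ side, since $R\subseteq F_1$ the $H$-torsor $\Spec(R)$ becomes trivial after base change to $F_1$, giving a concrete presentation of $R\otimes_F F_1$ as a coordinate ring of $H_{C_{F_1}}$ over $F_1$. Combining this trivial $H$-torsor with $R_1$ via the semidirect product structure $G=N\rtimes H$, I would take $T_1$ to be the compositum of $R$ and $R_1$ inside $F_0$, endowed with the $G_{C_{F_1}}$-action determined by the splitting $H\hookrightarrow G$, and check that $T_1$ is a Picard-Vessiot ring over $F_1$ with group $G_{C_{F_1}}$ containing $R$. On the $F_2$ side, Lemma~\ref{lem lifting PVR} already furnishes a Picard-Vessiot ring $S_2:=F_2R\subseteq F_0$ over $F_2$ with group $H_{C_{F_2}}$; I would then form the induced trivial $N$-torsor $T_2:=S_2\otimes_{C_{F_2}}C_{F_2}[N_{C_{F_2}}]$, extending the derivation of $S_2$ trivially on the second factor. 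The semidirect product structure makes $T_2$ a Picard-Vessiot ring over $F_2$ with group $G_{C_{F_2}}$, and the inclusion $R\subseteq S_2\subseteq T_2$ is built in.

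The next step is to identify the pullbacks of $T_1$ and $T_2$ over $F_0$ as differential $G$-torsors. Both carry $H$-torsor quotients isomorphic to $(R\cdot F_0)$, and their $N$-parts can be matched because $R_1\subseteq F_0$ trivializes the $N$-torsor on the $F_1$ side, while on the $F_2$ side the $N$-torsor is trivial by construction; selecting a common trivialization yields a differential $G$-torsor isomorphism $\phi\colon T_1\otimes_{F_1}F_0\iso T_2\otimes_{F_2}F_0$ compatible with the $R$-embeddings. Applying a torsor patching result based on the factorization property of $(F,F_1,F_2,F_0)$, in the spirit of the patching machinery from \cite{HH}, descends the data $(T_1,T_2,\phi)$ to a differential $G$-torsor $\Spec(S)$ over $F$ whose base changes recover $T_1$ and $T_2$. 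Simplicity of $S$ and the identification $C_S=C_F$ follow from the corresponding properties of $T_1$ and $T_2$ together with the equality $F_1\cap F_2=F$, so $S/F$ is a Picard-Vessiot ring, and comparison with the local Galois groups $G_{C_{F_i}}$ forces its differential Galois group to be all of $G$. The restriction map $\underline{\Aut}^\del(S/F)\twoheadrightarrow \underline{\Aut}^\del(R/F)$ corresponds to the projection $G\twoheadrightarrow H$ by construction, producing a proper solution.

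The main obstacle is the patching step: arranging $\phi$ so that it simultaneously respects the derivations, the $G$-torsor structure, and the two embeddings of $R$, and then guaranteeing that the descended $F$-algebra is not merely a $G$-torsor but an honest Picard-Vessiot ring with group exactly $G$ rather than a proper subgroup. The splitting $H\hookrightarrow G$ is essential at this point, because it lets the $N$-torsor on the $F_1$ side be compared with the trivial $N$-torsor on the $F_2$ side after trivialization over $F_0$; without splitness the resulting cocycle class need not arise from a $G$-torsor, and the construction of $T_2$ from $S_2$ would have no canonical candidate.
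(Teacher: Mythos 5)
You should first note that the paper does not actually prove this statement: it is quoted from \cite{BHHW} (Theorem~2.14 there), and the paper only records that it ``was proven by patching differential torsors.'' Your outline follows that same strategy, but two of your concrete steps fail. First, your candidate $T_1$, ``the compositum of $R$ and $R_1$ inside $F_0$,'' collapses: since $R\subseteq F_1\subseteq R_1$, that compositum is just $R_1$ itself, which has differential Galois group $N_{C_{F_1}}$ and Krull dimension $\dim N$, so it cannot carry a $G_{C_{F_1}}$-torsor structure unless $H$ is trivial. What is needed on the $F_1$-side is the induced torsor $\Ind_N^G(\Spec R_1)$, i.e.\ a tensor product $R_1\otimes_{C_{F_1}}C_{F_1}[H]$ with a twisted $G$-action coming from the splitting --- a tensor product, not a compositum inside a field. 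Second, neither $T_2=S_2\otimes_{C_{F_2}}C_{F_2}[N]$ (with trivial derivation on the second factor) nor the corrected $T_1$ is a Picard-Vessiot ring: their rings of constants contain $C_{F_2}[N]$ and $C_{F_1}[H]$ respectively, which are strictly larger than the base constants whenever $N$, resp.\ $H$, is nontrivial. So your assertions that the local pieces are Picard-Vessiot rings with group $G_{C_{F_i}}$, and that simplicity and $C_S=C_F$ for the patched ring ``follow from the corresponding properties of $T_1$ and $T_2$,'' cannot be right as stated. The local objects are only differential $G$-torsors; the fact that the descended torsor over $F=F_1\cap F_2$ is differentially simple, has constants $C_F$, and has differential Galois group all of $G$ is precisely the hard content of the argument, and in \cite{BHHW} it is extracted from the facts that $R_1$ has group exactly $N_{C_{F_1}}$ over $F_1$ and $F_2R$ has group exactly $H_{C_{F_2}}$ over $F_2$ (via Lemma~\ref{lem lifting PVR}), together with the intersection and factorization properties of the diamond.

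Beyond that, the step you yourself flag as ``the main obstacle'' --- producing $\phi$ over $F_0$ compatibly with the derivations, the $G$-actions and the two copies of $R$, and then descending --- is exactly where all the work lies, and your appeal to ``a torsor patching result in the spirit of \cite{HH}'' does not cover it: \cite{HH} patches vector spaces and torsors without differential structure, and the differential refinement (including the compatibility of the two $R$-embeddings, which uses $F_1\cap F_2=F$) is the content of the patching theorems of \cite{BHHW}. In short, your high-level plan coincides with the cited proof, but the local constructions as written are incorrect and the decisive descent-and-group-identification step is assumed rather than proved.
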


We are now in a position to prove our main theorem, using Theorem~\ref{thm BHHW}:

\begin{thm}\label{main thm}
Let $k$ be a field of characteristic zero and let $K=k((t))$. Consider the rational function field $F=K(x)$ with derivation $\del=\frac{d}{dx}$. Then every split $(K(x)/k(x))$-differential embedding problem over $F$ has a proper solution. 
\end{thm}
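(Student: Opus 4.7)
The plan is to apply Theorem~\ref{thm BHHW} to a differential diamond of the form provided by Proposition~\ref{prop diff diamond}, with the auxiliary Picard--Vessiot ring for the kernel $N$ supplied by Theorem~\ref{thm inverse problem}. Given the embedding problem $(N\rtimes H,R)$ over $F=K(x)$, induced from $(N_0\rtimes H_0,R_0)$ over $k(x)$, I would first apply Proposition~\ref{prop embed pvr} to $R_0/k(x)$ to obtain $\alpha\in k$, a finite extension $k'/k$ (which may be enlarged to be Galois, with group $\Gamma$), and a differential $k(x)$-embedding $R_0\hookrightarrow k'((x-\alpha))$.

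In the clean case $k'=k$, the inclusion $R_0\hookrightarrow k((x-\alpha))\subseteq F_P$ extends $K(x)$-linearly to a differential embedding $R\hookrightarrow F_P$, where $F_P=k((x-\alpha,t))$ corresponds to the closed point $P=\{x=\alpha\}$ of $\A^1_k$ in Proposition~\ref{prop diff diamond}. Theorem~\ref{thm inverse problem}, applied with $x-\alpha$ in place of $x$, supplies a Picard--Vessiot ring over $F_P$ with group $N=N_{C_{F_P}}$ living in $F_P^\circ$, so Theorem~\ref{thm BHHW} yields a proper solution $S/F$ with group $G$.

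For general $k'$, the image of $R_0$ in $k'((x-\alpha))$ need not lie in $k((x-\alpha))$, so $R$ does not embed in $F_P$ over $k$. I would then work temporarily over $k'$: setting $K'=k'((t))$ and $F'=K'(x)$, the base-changed problem $(N_{K'}\rtimes H_{K'},R\otimes_K K')$ is induced from $(N_0\rtimes H_0,R_0)\otimes_k k'$ over $k'(x)$, and the argument of the previous paragraph (applied over $k'$ with the diamond at $P=\{x=\alpha\}\in\A^1_{k'}$) yields a proper solution $S'/F'$ with group $G_{K'}$.

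The remaining step, which I expect to be the main obstacle, is Galois descent of $S'$ along $K'/K$. The strategy is to arrange that the construction of the auxiliary PVR in Theorem~\ref{thm inverse problem} and the patched solution in Theorem~\ref{thm BHHW} are $\Gamma$-equivariant, so that $S'$ carries a compatible $\Gamma$-action; since $\Gamma$ acts trivially on $R_0$ (and hence on $R=(R\otimes_K K')^\Gamma$), the fixed ring $S:=(S')^\Gamma$ should then be a $K(x)$-algebra with constants $(K')^\Gamma=K$ and group $G$, containing~$R$. Making this equivariance compatible with the patching argument, and verifying that descent preserves the Picard--Vessiot structure, is the delicate point.
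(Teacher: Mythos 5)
Your overall architecture (embed $R_0$ via Proposition~\ref{prop embed pvr}, build a diamond as in Proposition~\ref{prop diff diamond}, get the kernel Picard--Vessiot ring from Theorem~\ref{thm inverse problem}, and apply Theorem~\ref{thm BHHW}) is exactly the paper's, and your argument in the case $k'=k$ is complete. The gap is the case $k'\neq k$: you reduce it to Galois descent of the patched solution $S'$ along $K'/K$, but this step is not carried out and is not routine. The ring $S'$ produced by Theorem~\ref{thm BHHW} depends on a non-canonical matrix factorization over $F_0$, so there is no natural semilinear $\Gamma$-action on it; producing one would mean redoing the patching of differential torsors equivariantly. Even granting such an action, you would still need to check that the descended object is a Picard--Vessiot ring whose group is $G$ itself and not an inner form, i.e., that the descent datum on $\Autd(S'/F')$ matches the given $k$-structure of $G$. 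As written, the descent route is at least as hard as the original problem.

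The paper sidesteps descent entirely by a change of coordinates. Set $y=(x-\alpha)/t$ (geometrically: blow up $\P^1_{k[[t]]}$ at the point $(x-\alpha,t)$ and contract the old closed fiber), and take $P$ to be the closed point $(q(y),t)$ on the $y$-line, where $q$ is an irreducible monic polynomial with $k[Z]/(q(Z))\cong k'$; then $F_P=k'((q(y),t))$ by Proposition~\ref{prop patching}. The identity $\sum c_{ij}(x-\alpha)^it^j=\sum c_{ij}y^it^{i+j}$ shows that $k'[[x-\alpha,t]]\subseteq k'[y][[t]]$, and the inclusion $k'[y]\hookrightarrow k'[[q]]$ then yields a differential $F$-embedding $k'((x-\alpha,t))\hookrightarrow F_P$. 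Thus $R\subseteq k'((x-\alpha,t))$ lands in an honest patching field over the base $K=k((t))$: the finite extension $k'$ is absorbed into the residue field of the non-rational closed point $P$ rather than into the constants. After that, Theorem~\ref{thm inverse problem} (via Remark~\ref{remark step 3}, to account for the derivation $\del_a$ with $a=\del q/\del x$) and Theorem~\ref{thm BHHW} apply directly over $F$, with no base change and no descent. If you want to salvage your own approach, this is the missing idea to import.
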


\begin{proof}
Let $(N\rtimes H, R)$ be a split $(K(x)/k(x))$-differential embedding problem. 
Thus $R$ is a Picard-Vessiot ring over $F$ with differential Galois group $H$.  Moreover,
there exist linear algebraic groups $N_0$ and $H_0$ over $k$ such that $N$ is the base change of $N_0$ and $H$ is the base change of $H_0$ from $k$ to $K$, and there exists a Picard-Vessiot ring $R_0$ over $k(x)$ with differential Galois group $H_0$ such that $R\cong R_0\otimes_{k(x)}K(x)$ as differential $K(x)$-algebras. 

By Proposition~\ref{prop embed pvr}, there exists an $\alpha\in k$ and a finite extension $k'/k$ such that $R_0$ embeds into $k'((x-\alpha))$ as a differential $k(x)$-algebra, where $k'((x-\alpha))$ is equipped with the derivation $\frac{\del}{\del(x-\alpha)}$. Let $q(Z) \in k[Z]$ be an irreducible and monic polynomial of degree $r \ge 1$ such that $k'\cong k[Z]/(q(Z))$. Define 
$$y=\frac{x-\alpha}{t}.$$
Then $K(y)=F$ and we use the method of patching over fields (see Section~\ref{setup}) over the $y$-line $\P^1_{k[[t]]}$, which we call $\wh X$. Let $P$ be the point $(q(y),t)$ on the closed fibre $X$ of $\wh X$, and set $U=X\smallsetminus\{P\}$.  Applying Proposition~\ref{prop patching} (with $y$ here playing the role of $x$ there), and abbreviating $q(y)$ to $q$,  
we have equalities
\begin{eqnarray*}
F_U&=&\Frac\left(k\!\left[\frac{1}{q},\frac{y}{q},\dots,\frac{y^{r-1}}{q}\right]\![[t]]\right) \\
F_P&=&k'((q,t)) \\
F_P^\circ&=&k'((q))((t)).
\end{eqnarray*}

To prove the theorem, it suffices to verify the hypotheses of Theorem~\ref{thm BHHW} for the fields $F_1=F_P$, $F_2=F_U$, $F_0=F_P^\circ$, with respect to compatible derivations on those fields that extend the derivation $\del$ on $F$.   First note that for any $a \in F^\times$,  
the quadruple $(F,F_P,F_U,F_P^\circ)$ is a differential diamond with the factorization property by Proposition~\ref{prop diff diamond}, with respect to the restrictions of $\del_a$ (again with $y$ here playing the role of $x$ there).  By choosing $a=\del(q) = \del q/\del x \in F^\times$, the restriction of $\del_a$ to $F$ is equal to $\del$, since that restriction is 
\[\left(\frac{\del q}{\del y}\right)^{-1}\cdot a\cdot \frac{\del}{\del y}=\left(\frac{\del y}{\del x}\right)\cdot \frac{\del}{\del y}=\frac{\del}{\del x}=\del\]
by Proposition~\ref{prop diff diamond}.
Next, by Theorem~\ref{thm inverse problem} and Remark~\ref{remark step 3} (with $q$ here playing the role of $x$ in Theorem~\ref{thm inverse problem}), there is
a Picard-Vessiot ring $R_1/F_1$ with differential Galois group $N_{k'((t))}$ such that $R_1$ is a differential $F_1$-subalgebra of $F_0$ with respect to $\del_a$.  Thus it remains to show that 
the Picard-Vessiot ring $R$ over $F$ is a differential $F$-subalgebra of $F_P$.

Recall that $R_0$ embeds into $k'((x-\alpha))$ as a differential $k(x)$-algebra; in particular, $R_0$ embeds into $k'((x-\alpha,t))$ as a differential $k(x)$-algebra, as does $F = k((t))(x) = k((t))(x-\alpha)$.  So there is a canonical differential $F$-algebra homomorphism $R\cong R_0\otimes_{k(x)} k((t))(x)\to k'((x-\alpha,t))$; and since $R$ is a simple differential ring, this homomorphism is injective. Hence we may consider $R$ as a differential $F$-subalgebra of $k'((x-\alpha,t))$, with respect to the given derivation on $R$ and the derivation $\frac{\del}{\del(x-\alpha)}$ on $k'((x-\alpha,t))$ that extends $\frac{\del}{\del(x-\alpha)}$ on $k'[[x-a,t]]$. 
It therefore suffices to show that there is an $F$-algebra inclusion 
of $k'((x-\alpha,t))$ into $F_P$ with respect to which 
the derivation $\del_a$ on $F_P$ restricts to the derivation $\frac{\del}{\del(x-\alpha)}$ on $k'((x-\alpha,t))$.

To define the desired inclusion $k'((x-\alpha,t)) \subset F_P$, first note that 
$k'[[x-\alpha,t]]$ is a subring of $k'[y][[t]]$, where as above
$y = (x - \alpha)/t$, since $\sum_{i,j=0}^\infty c_{i,j}(x-\alpha)^i t^j
= \sum_{i,j=0}^\infty c_{i,j}y^i t^{i+j}$.  Meanwhile, the $k$-linear
inclusion $k[y] \hookrightarrow k'[[q]]$ (as in the proof of Proposition~\ref{prop patching}, 
with $y$ here playing the role of $x$ there) extends to a $k'$-linear 
inclusion of $k'[y]$ into $k'[[q]]$ and then to a $k'[[t]]$-linear inclusion of
$k'[y][[t]]$ into $k'[[q,t]]$.  Passing to the fraction fields, we
obtain inclusions $k'((x-\alpha,t)) \subset \Frac\left(k'[y][[t]]\right)
\subset k'((q,t)) = F_P$ of $F$-algebras.

By Lemma~\ref{prop derivations} (with $w=q$), 
the derivation $\del_a$ on $F_P = k'((q,t))$
takes the element $\sum_j f_j(y)t^j \in k'[y][[t]]$ to $a\sum_j\frac{\del}{\del q}f_j(y)t^j$. Using $a=\del q/\del x=t^{-1}\del q/\del y$, we conclude that $\del_a$ takes $\sum_j f_j(y)t^j \in k'[y][[t]]$ to
\[t^{-1}\frac{\del q}{\del y}\sum_j\frac{\del}{\del q}f_j(y)t^j=t^{-1}\sum_j\frac{\del}{\del y}f_j(y)t^j
= \sum_j \frac{\del f_j(y)}{\del y}t^{j-1}.\]
Hence it takes $\sum_{i,j=0}^\infty c_{i,j}(x-\alpha)^i t^j \in k'[[x-\alpha,t]]$ to
$\sum_{i,j=0}^\infty c_{i,j} i(x-\alpha)^{i-1} t^j$, using that $y = (x - \alpha)/t$.
That is, $\del_a$ restricts to $\frac{\del}{\del(x-\alpha)}$ on $k'[[x-\alpha,t]]$,
and hence on $k'((x-\alpha,t))$.
\end{proof}

The strategy of the above proof can be understood geometrically as 
follows:  The given Picard-Vessiot ring $R_0$ embeds into a field of the form 
$k'((x-\alpha))$, for some $\alpha\in k$ and some finite extension $k'/k$.  Hence $R$ 
embeds into the field $k'((x-\alpha,t))$, 
which can be identified with $F_{P_0} \otimes_k k'$, where $P_0$ is the 
closed point $(x-\alpha,t)$ on the projective $x$-line $\wh X_0 := 
\P^1_{k[[t]]}$.  Let $f:\wh Y \to \wh X_0$ be the blow-up of $\wh X_0$ 
at $P_0$.  Its closed fiber consists of two copies of the projective 
line over $k$: the proper transform of the closed fiber $X_0$ of $\wh 
X_0$, and the exceptional divisor $E$, which is the projective $y$-line over $k$.  
Here $y = (x - \alpha)/t$, and $y= \infty$ at the point where $X_0$ and $E$ meet.
Let $\wh Y \to \wh X$ be the blow-down that contracts the 
proper transform of $X_0$ to a point; thus $\wh X$ is the projective 
$y$-line over $k[[t]]$.  Over the closed point $P$ on $\wh X$ given by 
$(q(y),t)$, there is a unique closed point $Q$ on $\wh Y$ (since $y \ne \infty$ at $P$), 
and the morphism $\wh Y \to \wh X$ induces an isomorphism of $F_P$ with $F_Q$. 
As in the proof of
\cite[Proposition~3.2.4]{HHK:ref}, there is an inclusion of fields 
$F_{P_0} \hookrightarrow F_Q$.  (Namely, the morphism $\Spec(\wh R_Q) 
\to \wh Y$ factors through $\wh Y_{P_0}$, the pullback of $\wh Y$ over 
$\wh X_0$ with respect to
$\Spec(\wh R_{P_0}) \to \wh X_0$; and $\wh Y_{P_0} \to \Spec(\wh 
R_{P_0})$ is a birational isomorphism since $\wh Y \to \wh X_0$ is.) 
Since $F_Q \cong F_P$ contains $k'$, this yields a differential 
embedding of $R$ into $F_P$ (as in the above proof), after which we may apply 
Theorem~\ref{thm BHHW}
to a Picard-Vessiot ring $R_1$ over $F_P$ in order to conclude the argument.

\begin{rem}\label{nonsplit}
In Theorem~\ref{main thm}, one cannot expect to be able to drop the 
hypothesis that the embedding
problem is split, even in the case of finite constant groups.  This is 
due to the fact that $k(x)$
has cohomological dimension greater than one if $k$ is not algebraically 
closed, and hence general finite embedding problems need not have a proper
solution by \cite[I.3.4, 
Proposition~16]{Serre:CG}.  As a concrete example, take the 
$\Z/2\Z$-extension of $\Q(x)$ given by $y^2=x$; this is a Picard-Vessiot 
ring for the differential equation $y'=\frac{1}{2x}y$. This extension and the 
(non-split) short exact sequence $0 \to 2\Z/4\Z \to \Z/4\Z \to \Z/2\Z 
\to 0$ define a differential embedding problem over $\Q(x)$, which 
induces a differential embedding problem over $\Q((t))(x)$.  Suppose 
that the latter problem has a proper solution $E/\Q((t))(x)$; this is a 
$\Z/4\Z$-Galois extension of $\Q((t))(x)$ in which $\Q((t))$ is 
algebraically closed, and which contains $\Q((t))(y)$.  Let $A$ be the 
integral closure of $\Z[[t]][x]$ in $E$, and let $e$ be the ramification 
index over $(t)$ (so $e$ equals $1$ or $2$).  After replacing $t$ by 
$s=t^{1/e}$, we may assume that $A$ is unramified over $(t)$.  So the 
normalization $A_0$ of $A/(t)$ is a
$\Z/4\Z$-\'etale algebra over $\Q(x)$ whose inertia group at $(x)$ is 
all of $\Z/4\Z$, since it surjects onto $\Z/2\Z$.  Thus $A_0$ is a 
domain; and the base change of its fraction field to $\Q((x))$ is a 
$\Z/4\Z$-Galois field extension $L/\Q((x))$ that is totally ramified, 
has no residue field extension, and contains $\Q((y))$.  Hence $L$ is 
obtained by adjoining to $\Q((y))$ a square root $z$ of some element, 
which up to a square is of the form $cy$ for some $c \in \Q^\times$.
But since $L/\Q((x))$ is $\Z/4\Z$-Galois and dominates $\Q((y))$, the 
generator of its Galois group lifts the automorphism $y \mapsto -y$ of 
$\Q((y))$ and so takes $z$ to a square root of $-cy$.  But since $L/\Q((x))$ has no residue field extension, $L$ does not contain a square root of $-1$, a contradiction.
\end{rem}

\begin{rem}
By using Theorem~\ref{main thm} above as the key ingredient, the companion paper
\cite{BHHP} proves that every split differential embedding problem over $K(x)$ has a proper solution if $K$ is any large field (in the sense of \cite{pop:littlesurvey}) of infinite transcendence degree over~$\Q$.  Since
$k((t))$ is such a field $K$ for any field $k$ of characteristic zero, we obtain the following conclusion, extending Theorem~\ref{main thm} (\cite{BHHP}, Theorem~4.3(b)):

{\em Let $k$ be a field of characteristic zero and let $K=k((t))$. Consider the rational function field $F=K(x)$ with derivation $\del=\frac{d}{dx}$. Then every split differential embedding problem over $F$ has a proper solution.} 
\end{rem}

\medskip

\noindent Author information:

\medskip

\noindent Annette Bachmayr, n\'ee Maier: Mathematisches Institut der Universit\"at Bonn,
D-53115 Bonn, Germany.\\ email: {\tt bachmayr@math.uni-bonn.de}

\medskip

\noindent David Harbater: Department of Mathematics, University of Pennsylvania, Philadelphia, PA 19104-6395, USA.\\ email: {\tt harbater@math.upenn.edu}

\medskip

\noindent Julia Hartmann:  Department of Mathematics, University of Pennsylvania, Philadelphia, PA 19104-6395, USA.\\ email: {\tt hartmann@math.upenn.edu}

\end{document}